\documentclass[3p,12pt]{elsarticle}

\usepackage{latexsym,amssymb,amsmath,amsthm,amsfonts}
\usepackage{graphicx}
\usepackage{epstopdf}
\usepackage{caption}

\theoremstyle{plain}
\newtheorem{theorem}{Theorem}
\newtheorem{lemma}{Lemma}

\newtheorem{definition}{Definition}
\theoremstyle{remark}
\newtheorem{rem}{Remark}
\newtheorem{example}{Example}
\newtheorem{cor}{Corollary}

\journal{Journal of Mathematical Analysis and Applications}

\begin{document}

\begin{frontmatter}

\title{\textbf{Whittaker-Kotel'nikov-Shannon approximation of
$\varphi$-sub-Gaussian random processes}\\[5mm]
\small \textit{Short title: WKS approximation of
$Sub_{\varphi}(\Omega)$ processes}}

\author[ku]{Yuriy Kozachenko}
\ead{ykoz@ukr.net}

\author[la]{Andriy Olenko\corref{cor1}}
\ead{a.olenko@latrobe.edu.au}

\cortext[cor1]{Corresponding author. Phone: +61-3-9479-2609 \quad  Fax:  +61-3-9479-2466}
\address[ku]{Department of Probability Theory, Statistics and Actuarial Mathematics, Kyiv University, Kyiv, Ukraine}
\address[la]{Department of Mathematics and Statistics, La Trobe University, Victoria 3086, Australia}

\begin{abstract}
The article starts with generalizations of some classical results and new truncation error upper bounds in the sampling theorem for bandlimited stochastic processes.
Then, it investigates $L_p([0,T])$ and uniform approximations of $\varphi$-sub-Gaussian random processes by finite time sampling sums. Explicit truncation error upper bounds are established. Some specifications of the general results for which the assumptions can be easily verified are given. Direct analytical methods are employed to obtain the results.
\end{abstract}

\begin{keyword}
Sampling theorem \sep Truncation error upper bound \sep Convergence in $L_p([0,T])$ \sep Uniform convergence \sep Sub-Gaussian random process \sep  Bandlimited stochastic process.

\MSC  42C15 \sep 60G15 \sep 94A20 
\end{keyword}

\end{frontmatter}

\section{Introduction}
Recovering a continuous function from discrete samples and assessing the information lost are the fundamental problems in sampling theory and signal processing. Whittaker-Kotel'nikov-Shannon (WKS) theorems allow the coding of a continuous band-limited signal by a sequence of its discrete samples without the loss of information. On the other hand sampling results are important not only because of signal processing applications. WKS theorems are equivalent to various fundamental results
in mathematics, see, e.g.,  \cite{ but, hig, sma}. Therefore, they are also valuable for theoretical studies. In spite of the substantial progress in modern approximation methods (especially wavelets) WKS type expansions are still of great importance and numerous new refine results are published regularly by engineering and mathematics researchers, see, e.g., the recent volumes  \cite{hog,moe,zay} in Birkh\"{a}user's Applied and Numerical Harmonic Analysis series.

Despite extensive investigations of sampling expansions  of  deterministic signals there has been remarkably little fundamental theoretical study for the case of stochastic signals. The publications  \cite{he, ole2, pog}, and references therein  present an almost exhaustive survey of key results and approaches in stochastic sampling theory. 

The development of stochastic sampling theory  began with the truncation error upper bounds given by  \cite{bel,cam,pir}.
Using their pioneering approaches the majority of recent stochastic sampling results were obtained for harmonizable stochastic processes.  Spectral representations of  these stochastic processes and an inner product preserving isomorphism were used to employ  deterministic sampling results and error bounds for finding mean square approximation errors  for harmonizable stochastic processes, see, e.g.,  \cite{bel, he, ole1, ole2, pir, pog} and references therein. 

However, this approach is not applicable for other classes of stochastic processes or other measures of deviation.    For example, for various practical applications one needs to require uniform convergence  instead of the mean-square one.  Also, from a practical point of view, measures of the closeness of trajectories are often more appropriate than estimates of mean-square errors in each time point. Controlling signal distortions in the mean-square sense may result in situations where relevant signal features are substantially locally distorted. Instead of small mean-square errors one may need to guarantee that the signal values have not been changed more than a certain tolerance. For example,  near-lossless compression requires small user-defined tolerance levels, see \cite{che,  har}. Also, it is often required to give an adequate description of the performance of approximations in both cases, for points where signals are relatively smooth and points where spikes occur. The uniform measure of closeness of trajectories  maintains equal precision  throughout  the  entire  signal  support. It indicates the necessity of elaborating special techniques. Recently a considerable attention was given to wavelet orthonormal series representations of stochastic processes. Some new results and references on convergence of wavelet expansions of random processes can be found in  \cite{kozol1, kozol2}. WKS sampling is an important example of such expansions and requires specific methods and techniques.

The analysis and the approach presented in the paper contribute to these investigations  in the former sampling literature. Sampling truncation errors for new classes of stochastic processes and probability metrics are given.  Novel techniques to approximate  sub-Gaussian random processes with given accuracy and reliability are developed. Finally, it should be mentioned that the analysis of the rate of convergence gives a constructive algorithm for determining the number of terms in the WKS expansions to ensure the uniform approximation of stochastic processes with given accuracy.

The article derives sampling results for two classes of the so-called $\varphi$-sub-Gaussian random processes. These classes play an important role in  extensions of various properties of Gaussian processes to more general settings. To the best of our knowledge, the WKS expansions have never been studied for sub-Gaussian random processes and using  $L_p([0,T])$ and  uniform probability metrics. This work was intended as an attempt to obtain first results in this direction.

Note that even for the case of Gaussian processes the obtained  sampling results and methodology are new. There are no known results on  $L_p([0,T])$ and uniform sampling approximations of Gaussian processes in the literature.

The article is organized as follows. First, it generalizes some Belyaev's results. Then, Section 3 introduces two classes of $\varphi$-sub-Gaussian random processes. Section 4 presents results on the approximation of $\varphi$-sub-Gaussian random processes in  $L_p([0,T])$ with a given accuracy and reliability.  Section 5 establishes explicit truncation error upper bounds in uniform sampling approximations of $\varphi$-sub-Gaussian random processes. Finally, short conclusions and some problems for further investigation are presented in Section~\ref{sec6}. 

We use direct analytical and  probability methods to obtain all results. 
Some computations and plotting in Example~\ref{ex2} were performed by using Maple 17.0 of Waterloo Maple~Inc. 

In what follows $C$ denotes constants which are not important for our exposition. Moreover, the same symbol  may be used for different constants appearing in the same proof.

\section{Kotel'nikov-Shannon stochastic sampling}

Known deterministic sampling methods often may not be appropriate to approximate stochastic processes and to estimate stochastic reconstruction errors. Since random signals  play a key role in modern signal processing new refined sampling results for stochastic processes are required. 

This section generalizes some results in  \cite{bel} and obtains new truncation error upper bounds in the WKS sampling theorem for bandlimited stochastic processes. 

 Let $\mathbf X(t),$ $t\in \mathbf{R},$ be a stationary
random process with $ \mathbf E\mathbf \mathbf X(t)=0$ whose spectrum is bandlimited to $[-\Lambda,\Lambda),$ that is
\[\mathbf B(\tau):=\mathbf E\mathbf X(t+\tau) \mathbf X(t)= \int_{-\Lambda}^\Lambda e^{i\tau \lambda} dF(\lambda),\] 
where $F(\cdot)$ is the spectral function of $\mathbf X(t).$ The  process $ \mathbf X(t)$ can be represented as 
\begin{equation}\label{spect}\mathbf X(t)= \int_{-\Lambda}^\Lambda e^{it \lambda} d\Phi(\lambda),
\end{equation} 
where $\Phi(\cdot)$ is a random measure on $\mathbb R$ such that $\mathbf E\left[ \Phi(\Delta_1)\Phi(\Delta_2)\right]=F(\Delta_1\cap \Delta_2)$ for any measurable sets $\Delta_1, \Delta_2\subset \mathbb R.$

Then, for all $\omega>\Lambda$ there holds
\begin{equation}\label{shan1}
\mathbf X(t)=\sum_{k=-\infty}^\infty \frac{\sin\left(\omega\left(t-\frac{k\pi}{\omega}\right)\right)}{\omega\left(t-\frac{k\pi}{\omega}\right)}\, \mathbf X\left(\frac{k\pi}{\omega}\right),
\end{equation}
and the series (\ref{shan1}) converges uniformly in mean square, see, for example,  \cite{bel}.

Let us consider the truncation version of (\ref{shan1}) given by the formula
\begin{equation}\label{shan2}
\mathbf X_n(t):=\sum_{k=-n}^n \frac{\sin\left(\omega\left(t-\frac{k\pi}{\omega}\right)\right)}{\omega\left(t-\frac{k\pi}{\omega}\right)}\, \mathbf X\left(\frac{k\pi}{\omega}\right).
\end{equation}

In his classical paper \cite{bel} Belyaev proved  a sampling theorem for random processes with bounded spectra. The key ingredient in obtaining the main result was an explicit upper bound of the reconstruction error. In the above notations, the bound can be written as

\[\mathbf E \left|\mathbf X(t)-\mathbf X_n(t)\right|^2\le \frac{\mathbf 16 \omega^2(2\pi+ t\omega)^2 B(0)}{\pi^4n^2\left(1-\frac{\Lambda}{\omega}\right)^2}.\]

Part 1 of Theorem~\ref{the0} below generalizes this result, while part 2 obtains novel bounds for increments of the stochastic process $\mathbf X(t)-\mathbf X_n(t).$  Note that \cite{bel} has no results for increments analogous to  those reported in part 2.

 \begin{theorem}\label{the0}  Let $z\in (0,1),$ $t >0,$ $s>0.$ Then  
\begin{enumerate}
\item[\rm 1.]  for $n \ge \frac{\omega t}{\pi \sqrt{z}}$ it holds that
\[\mathbf E \left|\mathbf X(t)-\mathbf X_n(t)\right|^2\le n^{-2}\,C_n(t),\]
where 
\begin{equation}\label{Cn}C_n(t):=\mathbf B(0)\cdot\left(\frac{4\omega t}{\pi^2 (1-z)}+\frac{4\left(z+1+\frac{1}{n}\right)}{\pi (1-z)^2\left(1-\frac{\Lambda}{\omega}\right)} \right)^2;
\end{equation}
\item[\rm 2.] for $n \ge\frac{\omega}{\pi \sqrt{z}}\max(t,s)$ it holds that
\[\mathbf E \left(\mathbf Y_n(t)-\mathbf Y_n(s)\right)^2\le \left(\frac{t-s}{n}\right)^{2} b_n(t,s),\]
where $\mathbf Y_n(t):=\mathbf X(t)-\mathbf X_n(t),$ 
\begin{eqnarray}b_n(t,s)&:=&\mathbf B(0)\cdot\left(W_n(t,s)+\frac{Q_n(t,s)}{\left(1-\frac{\Lambda}{\omega}\right)} \right)^2,\label{bn}\\
W_n(t,s)&:=& \frac{4\omega}{\pi^2(1-z)} \left(\omega s +1+\frac{\omega^2(s+t)s}{\pi^2n^2 (1-z)} \right),\nonumber\\
Q_n(t,s)&:=&\frac{2\omega}{\pi(1-z)^2}\left({z+1+n^{-1}}+\frac{2\omega(s+t)}{n \pi^2}\right).\nonumber
\end{eqnarray}
\end{enumerate}
 \end{theorem}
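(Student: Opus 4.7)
The plan is to reduce both bounds to deterministic pointwise estimates of a single kernel function via the spectral representation (\ref{spect}). For $|\lambda|\le \Lambda<\omega$ the map $t\mapsto e^{it\lambda}$ is itself bandlimited to $[-\omega,\omega]$, so WKS applied to it yields
\[
\mathbf X(t)-\mathbf X_n(t)=\int_{-\Lambda}^{\Lambda}G_n(t,\lambda)\,d\Phi(\lambda),\qquad
G_n(t,\lambda):=\sum_{|k|>n}\frac{\sin(\omega t-k\pi)}{\omega t-k\pi}\,e^{ik\pi\lambda/\omega}.
\]
The isometry of the random measure $\Phi$ then gives $\mathbf E|\mathbf X(t)-\mathbf X_n(t)|^2\le \mathbf B(0)\sup_{|\lambda|\le\Lambda}|G_n(t,\lambda)|^2$ and, analogously, $\mathbf E(\mathbf Y_n(t)-\mathbf Y_n(s))^2\le \mathbf B(0)\sup_{|\lambda|\le\Lambda}|G_n(t,\lambda)-G_n(s,\lambda)|^2$, so everything reduces to bounding these suprema pointwise in $\lambda$.

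For part~1, I would exploit $\sin(\omega t-k\pi)=(-1)^k\sin(\omega t)$ to rewrite $G_n(t,\lambda)=\sin(\omega t)\sum_{|k|>n}e^{ik\phi}/(\omega t-k\pi)$ with $\phi:=\pi(1+\lambda/\omega)$. Splitting the sum at $\pm n$, each tail has a monotone weight $|k\pi-\omega t|^{-1}$, so Abel summation controls it by the product of the leading weight and the bounded partial sum $|\sum e^{ik\phi}|\le |\sin(\phi/2)|^{-1}$. Since $\phi/2\in[\pi(1-\Lambda/\omega)/2,\pi(1+\Lambda/\omega)/2]$, Jordan's inequality $\sin x\ge 2x/\pi$ gives $|\sin(\phi/2)|^{-1}\le (1-\Lambda/\omega)^{-1}$, which is the origin of the second summand of $C_n(t)$. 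The assumption $n\ge \omega t/(\pi\sqrt z)$ yields $k\pi-\omega t\ge\pi(k-n\sqrt z)$ for $k>n$, and using $(1-\sqrt z)(1+\sqrt z)=1-z$ converts the resulting $(1-\sqrt z)$-denominators into the $(1-z)$-denominators displayed in (\ref{Cn}).

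For part~2, the algebraic identity
\[
(-1)^k\!\left[\frac{\sin(\omega t)}{\omega t-k\pi}-\frac{\sin(\omega s)}{\omega s-k\pi}\right]
=(-1)^k\frac{\omega\bigl(s\sin(\omega t)-t\sin(\omega s)\bigr)+k\pi\bigl(\sin(\omega s)-\sin(\omega t)\bigr)}{(\omega t-k\pi)(\omega s-k\pi)}
\]
splits the summand of $G_n(t,\lambda)-G_n(s,\lambda)$ into two pieces that must be treated differently. The first piece decays like $k^{-2}$, so its series converges absolutely and can be estimated by direct tail summation; the factor $|t-s|$ is extracted via $|s\sin(\omega t)-t\sin(\omega s)|\le(\omega s+1)|t-s|$, the estimate $\sum_{k>n}k^{-2}\le n^{-1}$ gives the leading $n^{-2}$, and the second-order correction in the expansion of $1/[(k\pi-\omega t)(k\pi-\omega s)]$ around $1/(k\pi)^2$ supplies the $\omega^2(s+t)s/(\pi^2n^2(1-z))$ inside $W_n(t,s)$; crucially, this piece needs no oscillation cancellation and is therefore free of the $(1-\Lambda/\omega)^{-1}$ factor. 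The second piece decays only like $k^{-1}$ and must be bounded by the same Abel summation as in part~1, together with $|\sin(\omega s)-\sin(\omega t)|\le\omega|t-s|$, producing the $Q_n(t,s)$ contribution that does carry $(1-\Lambda/\omega)^{-1}$.

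The main obstacle is the bookkeeping required to match the exact coefficients in (\ref{Cn}) and (\ref{bn}): the boundary terms from Abel summation, the contributions of both directions $k>n$ and $k<-n$, the primary and second-order corrections in the tail sums, and the thresholds coming from $n\ge\omega\max(t,s)/(\pi\sqrt z)$ must all be tracked separately and repackaged into the precise two-summand forms $W_n+Q_n/(1-\Lambda/\omega)$ and $4\omega t/(\pi^2(1-z))+4(z+1+n^{-1})/(\pi(1-z)^2(1-\Lambda/\omega))$. This is careful algebra rather than a conceptually new step; once the decomposition above is in place, the bounds follow from elementary trigonometric inequalities.
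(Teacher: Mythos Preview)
Your reduction via the spectral isometry to pointwise bounds on the kernel $G_n(t,\lambda)$ is exactly the paper's starting point. The substantive difference is that the paper first \emph{pairs} the $k$ and $-k$ terms into
\[
R_k(t,\lambda)=\frac{\sin(\omega t)}{(\omega t)^2-(k\pi)^2}\Bigl[2\omega t\cos\!\bigl(k\pi(1-\tfrac{\lambda}{\omega})\bigr)-2ik\pi\sin\!\bigl(k\pi(1-\tfrac{\lambda}{\omega})\bigr)\Bigr],
\]
so that the real (cosine) part has weights $2\omega t/[(\omega t)^2-(k\pi)^2]\sim k^{-2}$ and the imaginary (sine) part has weights $2k\pi/[(\omega t)^2-(k\pi)^2]\sim k^{-1}$. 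In part~1 the cosine piece is bounded by absolute tail summation, producing the first summand $4\omega t/[\pi^2(1-z)]$ of $C_n(t)^{1/2}$, while the sine piece is handled by the Abel-type Lemma~\ref{lem02}; the factor $(z+1+n^{-1})/(1-z)^2$ in the second summand comes from the successive differences $|a_{k+1}-a_k|$ of $a_k=2k\pi/[(\omega t)^2-(k\pi)^2]$, not from a leading-weight telescoping.

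Your plan for part~1 applies Abel summation directly to each tail of $\sum e^{ik\phi}/(\omega t-k\pi)$ with monotone weights. That is legitimate, and since the differences telescope it yields a single term of size $O\bigl(1/[n(1-z)(1-\Lambda/\omega)]\bigr)$. This is in fact \emph{smaller} than $S_n(t)/n$, so the stated inequality would follow a fortiori---but it cannot reproduce the displayed two-summand form of $C_n(t)$, and your text (``the origin of the second summand'') leaves the first summand $4\omega t/[\pi^2(1-z)]$, which carries no $(1-\Lambda/\omega)^{-1}$, unaccounted for. The mechanism that separates the two contributions is precisely the $\pm k$ pairing, which you do not invoke.

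For part~2 your algebraic split into a $k^{-2}$ piece (direct summation) and a $k^{-1}$ piece (Abel) mirrors the paper's real/imaginary decomposition in spirit, but without pairing you work with factors $(k\pi-\omega t)(k\pi-\omega s)$ rather than the paper's $[(\omega t)^2-(k\pi)^2][(\omega s)^2-(k\pi)^2]$. The structure is right and will go through, but the successive-difference computation for Abel summation and the second-order expansion of the $k^{-2}$ denominator will produce constants that do not coincide with the stated $Q_n$ and $W_n$; matching those verbatim requires the paper's paired form.
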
 
\begin{rem} The parameter $z$ was introduced to provide simple expressions for the upper bounds.  To guarantee a specified reconstruction accuracy the number of terms in parts 1 and 2 of Theorem~\ref{the0} can be selected as  $n =\lceil \frac{\omega t}{\pi \sqrt{z}}\rceil $ and $n = \lceil\frac{\omega}{\pi \sqrt{z}}\max(t,s)\rceil ,$  respectively, where $\lceil x \rceil$  denotes the smallest integer not less than $x.$
\end{rem}

To prove Theorem~\ref{the0} we need two lemmata. 

 \begin{lemma}\label{lem01}If  $0 \le n < m$ and $\nu\in(0,1],$ then 
\[\left|\sum_{k=n}^m\sin(k\pi\nu)\right|\le \frac{1}{\nu}.\]
 \end{lemma}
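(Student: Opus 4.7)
The plan is to apply the classical closed-form identity for a truncated sum of sines in arithmetic progression, which reduces the problem to a one-line estimate of a sine in the denominator. Concretely, multiplying each term $\sin(k\pi\nu)$ by $2\sin(\pi\nu/2)$ and using the product-to-sum formula $2\sin A\sin B=\cos(A-B)-\cos(A+B)$ makes the sum telescope, giving
\[
\sum_{k=n}^{m}\sin(k\pi\nu)=\frac{\cos\!\bigl((n-\tfrac12)\pi\nu\bigr)-\cos\!\bigl((m+\tfrac12)\pi\nu\bigr)}{2\sin(\pi\nu/2)}.
\]
Since the numerator is bounded in absolute value by $2$, I obtain the preliminary bound
\[
\left|\sum_{k=n}^{m}\sin(k\pi\nu)\right|\le\frac{1}{\sin(\pi\nu/2)}.
\]

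Next I would estimate $\sin(\pi\nu/2)$ from below. For $\nu\in(0,1]$, the argument $\pi\nu/2$ lies in $(0,\pi/2]$, so Jordan's inequality $\sin x\ge (2/\pi)x$ on $[0,\pi/2]$ applies and yields $\sin(\pi\nu/2)\ge (2/\pi)(\pi\nu/2)=\nu$. Combining this with the previous display delivers the claimed bound
\[
\left|\sum_{k=n}^{m}\sin(k\pi\nu)\right|\le\frac{1}{\nu}.
\]

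There is essentially no obstacle here; the only thing to be careful about is the boundary case $\nu=1$ (where every term is zero and the bound is trivial) and to make sure the telescoping identity is written with indices that match the summation range $k=n,\dots,m$, including the edge case $n=0$ for which $\sin(0)=0$ and the identity still holds verbatim. The lemma is essentially a sharpened form of Dirichlet's bound for partial sums of $\sin(k\theta)$, tailored to the particular choice $\theta=\pi\nu$ that will later arise from the sampling nodes $k\pi/\omega$.
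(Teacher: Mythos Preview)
Your proof is correct and follows essentially the same route as the paper: both arguments reduce to the bound $\bigl|\sum_{k=n}^{m}\sin(k\pi\nu)\bigr|\le 1/\sin(\pi\nu/2)$ and then apply Jordan's inequality $\sin x\ge (2/\pi)x$ on $(0,\pi/2]$. The only cosmetic difference is that the paper obtains the intermediate bound via the geometric sum of complex exponentials $e^{ik\pi\nu}$, whereas you use the equivalent real telescoping identity from the product-to-sum formula; the edge case $\nu=1$ you flag is indeed trivial since every term vanishes.
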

\begin{proof} Notice that
\[\left|\sum_{k=n}^m\sin(k\pi\nu)\right|=\left|\Im \left(\sum_{k=n}^m e^{i k\pi\nu}\right)\right|\le \left|\sum_{k=n}^m e^{i k\pi\nu}\right|\]
\[=\left|\frac{ e^{i (m+1)\pi\nu}- e^{i n\pi\nu}}{ e^{i \pi\nu}-1}\right|\le \frac{2}{ \left|e^{i \pi\nu}-1\right|}=\frac{1}{\sin\left(\frac{\pi\nu}{2}\right)}.\]
The statement of the lemma follows from the inequality $\sin(x)>\frac{2}{\pi}x,$ where $0<x<\pi/2.$
\end{proof}

 \begin{lemma}\label{lem02} If $\{a_k,k\in \mathbb{N}\}$ is a sequence of real numbers, $0 \le n < m,$ and $\nu\in(0,1],$ then 
\[\left|\sum_{k=n}^m a_k\sin(k\pi\nu)\right|\le \frac{1}{\nu}\left(\sum_{k=n}^m \left|a_{k+1}-a_k\right| +|a_{m+1}|\right).\]
 \end{lemma}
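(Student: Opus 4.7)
The plan is to prove this by Abel summation (summation by parts), which is the standard route for such one-sided transfer of a bound from partial sums of $\sin(k\pi\nu)$ to weighted sums.

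First, I would introduce the partial sums $S_j := \sum_{k=n}^{j}\sin(k\pi\nu)$ for $j \ge n$, together with the convention $S_{n-1}:=0$, so that $\sin(k\pi\nu)=S_k-S_{k-1}$. By Lemma~\ref{lem01} applied on the range $[n,j]$, we have $|S_j| \le 1/\nu$ uniformly in $j \ge n-1$. This is the only analytic input needed; everything else is algebra.

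Next I would carry out the Abel rearrangement. Writing
\[
\sum_{k=n}^m a_k \sin(k\pi\nu) \;=\; \sum_{k=n}^m a_k(S_k-S_{k-1})
\;=\; a_m S_m + \sum_{k=n}^{m-1}(a_k-a_{k+1})S_k,
\]
where the second equality uses $S_{n-1}=0$ and a standard index shift in the second piece. Taking absolute values and inserting $|S_k|\le 1/\nu$ yields
\[
\left|\sum_{k=n}^m a_k \sin(k\pi\nu)\right| \;\le\; \frac{1}{\nu}\!\left(|a_m|+\sum_{k=n}^{m-1}|a_{k+1}-a_k|\right).
\]

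Finally, to put the bound in the form stated in the lemma, I would absorb $|a_m|$ using the triangle inequality $|a_m|\le |a_{m+1}|+|a_{m+1}-a_m|$, which bumps the upper index of the sum of differences from $m-1$ to $m$ and produces the free $|a_{m+1}|$ term. There is no real obstacle here — the only thing to watch is the index bookkeeping in the Abel shift and the mismatch between $|a_m|$ and $|a_{m+1}|$, which is precisely what this last triangle-inequality step repairs.
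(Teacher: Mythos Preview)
Your proof is correct and uses essentially the same approach as the paper: Abel summation combined with the uniform bound $|S_j|\le 1/\nu$ from Lemma~\ref{lem01}. The only cosmetic difference is that the paper applies the Abel transformation in the form $\sum_{k=n}^m a_k\sin(k\pi\nu)=B_m a_{m+1}-\sum_{k=n}^m B_k(a_{k+1}-a_k)$, which lands directly on the stated bound, whereas your variant produces the boundary term $a_m S_m$ and hence requires the extra triangle-inequality step $|a_m|\le |a_{m+1}|+|a_{m+1}-a_m|$ to match the statement.
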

\begin{proof} By the Abel transformation 
\[\sum_{k=n}^m a_k\sin(k\pi\nu)=B_ma_{m+1}-\sum_{k=n}^m B_k\left(a_{k+1}-a_k\right),\]
where $B_k:=\sum_{l=n}^k\sin(l\pi\nu).$

Now, Lemma~\ref{lem02} follows from  Lemma~\ref{lem01}.
\end{proof}
\begin{proof} To prove Theorem~{\rm\ref{the0}} we note that it follows from the spectral representation (\ref{spect})  and 

\[e^{it\lambda}=\sum_{k=-\infty}^\infty e^{ik\pi\lambda/\omega}\,\frac{\sin\left(\omega\left(t-\frac{k\pi}{\omega}\right)\right)}{\omega\left(t-\frac{k\pi}{\omega}\right)}\]
that 
\[\mathbf X(t)-\mathbf X_n(t)=\int_{-\omega}^\omega\sum_{|k|> n} R_k(t,\lambda)d\Phi(\lambda),\]
where 
\[
R_k(t,\lambda):= e^{ik\pi\lambda/\omega}\,\frac{\sin\left(\omega\left(t-\frac{k\pi}{\omega}\right)\right)}{\omega\left(t-\frac{k\pi}{\omega}\right)}+e^{-ik\pi\lambda/\omega}\,\frac{\sin\left(\omega\left(t+\frac{k\pi}{\omega}\right)\right)}{\omega\left(t+\frac{k\pi}{\omega}\right)}\]
\begin{equation}\label{Rk}
=\frac{\sin(\omega t)}{(\omega t)^2-(k\pi)^2}\left[2\omega t\cos\left(k\pi\left(1-\frac{\lambda}{\omega}\right)\right)-2ik\pi \sin\left(k\pi\left(1-\frac{\lambda}{\omega}\right)\right)\right]
\end{equation}
and
\begin{equation}\label{est0}\mathbf E \left|\mathbf X(t)-\mathbf X_n(t)\right|^2=\int_{-\omega}^\omega\left(\sum_{|k|> n} R_k(t,\lambda)\right)^2 dF(\lambda).
\end{equation}
Let $\lambda>0$ and  $(\omega t)^2 \le z(n\pi)^2,$ $z \in (0,1).$ Notice, that by (\ref{Rk}) we obtain
\[\Im\left(\sum_{|k|> n} R_k(t,\lambda)\right)=-\sum_{|k|> n} \frac{ 2k\pi\sin(\omega t)}{(\omega t)^2-(k\pi)^2} \sin\left(k\pi\left(1-\frac{\lambda}{\omega}\right)\right).\]
Let $a_k:=\frac{ 2k\pi\sin(\omega t)}{(\omega t)^2-(k\pi)^2}.$  As $a_k\to 0$ when $k\to\infty,$ then it follows from Lemma~\ref{lem02} that
\begin{equation}\label{est1}\left|\Im\left(\sum_{|k|> n} R_k(t,\lambda)\right)\right| \le \frac{1}{1-\frac{\lambda}{\omega}}\cdot\sum_{|k|> n}|a_{k+1}-a_k|.\end{equation}
It follows from $(\omega t)^2\le z(n\pi)^2,$ $z \in (0,1),$ that for $k>n:$
\[|a_{k+1}-a_k|\le 2\pi \left|\frac{ k}{(\omega t)^2-(k\pi)^2}-\frac{ k+1}{(\omega t)^2-((k+1)\pi)^2}\right|\]
\[=  \frac{ 2\pi\left((\omega t)^2+k(k+1)\pi^2\right)}{\left( (k\pi)^2-(\omega t)^2\right)\left(((k+1)\pi)^2- (\omega t)^2\right)}\le \frac{ 2\pi^3\left(zk^2+k(k+1)\right)}{(k\pi)^4(1-z)^2} \]
\begin{equation}\label{estz}\le  \frac{2\left(z+1+n^{-1}\right)}{\pi k^2(1-z)^2}.\end{equation}
Analogously one can obtain that  (\ref{estz}) also holds for $k<-n.$

By (\ref{est1}) and (\ref{estz}) we get
\[\left|\Im\left(\sum_{|k|> n} R_k(t,\lambda)\right)\right| \le \frac{1}{1-\frac{\lambda}{\omega}}\cdot \frac{4\left(z+1+n^{-1}\right)}{\pi (1-z)^2}\sum_{k=n+1}^{+\infty}\frac{1}{k^2} \]
\begin{equation}\label{est2}\le \frac{1}{1-\frac{\Lambda}{\omega}}\cdot \frac{4\left(z+1+n^{-1}\right)}{\pi (1-z)^2}\cdot \frac{1}{n}.
\end{equation}
It follows from (\ref{Rk}) that
\[\left|\Re \left(\sum_{|k|> n} R_k(t,\lambda)\right)\right|=\left|\sum_{|k|> n} \frac{ 2\omega t\sin(\omega t)}{(\omega t)^2-(k\pi)^2} \cos\left(k\pi\left(1-\frac{\lambda}{\omega}\right)\right)\right|\]
\begin{equation}\label{est3}\le \sum_{|k|> n} \frac{ 2\omega t}{(k\pi)^2-(\omega t)^2} \le  \frac{4\omega t}{\pi^2(1-z)}\sum_{k = n+1}^{+\infty}\frac{1}{k^2}\le \frac{4\omega t}{\pi^2(1-z)}\cdot\frac{1}{n}.\end{equation}
Combining (\ref{est2}) and (\ref{est3}) we obtain
\[\left|\sum_{|k|> n} R_k(t,\lambda)\right|\le \frac{S_n(t)}{n},\]
where
\[S_n(t):= \frac{4\omega t}{\pi^2 (1-z)}+\frac{4\left(z+1+\frac{1}{n}\right)}{\pi (1-z)^2\left(1-\frac{\Lambda}{\omega}\right)}.\]
For the case $\lambda<0$ the proof is analogous.

Finally, item 1 of the theorem follows from (\ref{est0}) and the estimate
\begin{equation}\label{Dif}
\mathbf E \left|\mathbf X(t)-\mathbf X_n(t)\right|^2\le \int_{\omega}^\omega \frac{S_n^2(t)}{ n^{2}}dF(\lambda)=\frac{S_n^2(t)}{ n^{2}}\mathbf B(0)=\frac{C_n(t)}{ n^{2}}.
\end{equation} 

Now we prove item 2 of the theorem. Similarly to (\ref{est0}) it holds true that 
\[
\mathbf Y_n(t)-\mathbf Y_n(s)=\int_{-\omega}^\omega\sum_{|k|> n} \left(R_k(t,\lambda)- R_k(s,\lambda)\right) d\Phi(\lambda),
\]
\begin{equation}\label{Rk1}
\mathbf E \left|\mathbf Y_n(t)-\mathbf Y_n(s)\right|^2=\int_{-\omega}^\omega\left(\sum_{|k|> n} \left(R_k(t,\lambda)- R_k(s,\lambda)\right)\right)^2 dF(\lambda).
\end{equation}
Let $\lambda>0$ and $(\omega \max(t,s))^2 \le z(n\pi)^2,$ $z \in (0,1).$

It follows from Lemma~\ref{lem02} that
\begin{equation}\label{Dk}\left|\Im\left(\sum_{|k|> n} \left( R_k(t,\lambda) - R_k(s,\lambda)\right)\right)\right| \le \frac{1}{1-\frac{\lambda}{\omega}}\cdot\sum_{|k|> n}D_k,
\end{equation}
where
\[D_k:= 2\pi \left|\frac{k\sin(\omega t)}{(\omega t)^2-(k\pi)^2}-\frac{k\sin(\omega s)}{(\omega s)^2-(k\pi)^2}-\frac{(k+1)\sin(\omega t)}{(\omega t)^2-((k+1)\pi)^2}+\frac{(k+1)\sin(\omega s)}{(\omega s)^2-((k+1)\pi)^2}\right|. \]

We can estimate $D_k$ as follows
\[D_k \le 2\pi \left(\left|\sin(\omega t)-\sin(\omega s)\right|\left|\frac{k}{(\omega t)^2-(k\pi)^2}-\frac{k+1}{(\omega t)^2-((k+1)\pi)^2}\right|+|\sin(\omega s)|\right.\]
\[\left.\times\left|\frac{k}{(\omega t)^2-(k\pi)^2}-\frac{k+1}{(\omega t)^2-((k+1)\pi)^2}-\frac{k}{(\omega s)^2-(k\pi)^2}+\frac{k+1}{(\omega s)^2-((k+1)\pi)^2}\right|\right). \]

By the estimate
\[\left|\frac{(\omega s)^2-(\omega t)^2}{\left((\omega t)^2-(k\pi)^2\right)\left((\omega s)^2-(k\pi)^2\right)}\right|\le 
\frac{\omega^2(s+t)|t-s|}{(k\pi)^4(1-z)^2}\]
we obtain
\[\left|\frac{k}{(\omega t)^2-(k\pi)^2}-\frac{k+1}{(\omega t)^2-((k+1)\pi)^2}-\frac{k}{(\omega s)^2-(k\pi)^2}+\frac{k+1}{(\omega s)^2-((k+1)\pi)^2}\right|\]
\[\le (k+1)\left|\frac{(\omega s)^2-(\omega t)^2}{\left((\omega t)^2-((k+1)\pi)^2\right)\left((\omega s)^2-((k+1)\pi)^2\right)}\right|\]
\begin{equation}\label{est4} + k\left|\frac{(\omega s)^2-(\omega t)^2}{\left((\omega t)^2-(k\pi)^2\right)\left((\omega s)^2-(k\pi)^2\right)}\right| \le \frac{2\,\omega^2(s+t)|t-s|}{k^3 \pi^4(1-z)^2}. \end{equation}

Therefore, by (\ref{estz}), (\ref{est4}), and the inequality 
\begin{equation}\label{sin}\left|\sin(\omega t)-\sin(\omega s)\right|\le 2\left|\sin\left(\frac{\omega( t-s)}{2}\right)\right|\le |t-s|\cdot\omega
\end{equation}
we get
\[D_k\le {}\left(\frac{2\omega|t-s|\left(z+1+n^{-1}\right)}{\pi k^2(1-z)^2}+\frac{4\omega^2(s+t)|t-s|}{ \pi^3 k^3(1-z)^2}\right)\]
\[=|t-s|\cdot \frac{2\omega}{\pi(1-z)^2}\left({z+1+n^{-1}}+\frac{2\omega(s+t)}{k \pi^2}\right)\cdot \frac{1}{k^2}.\]
Hence, it follows from (\ref{Dk}) that
\begin{equation}\label{Dk1}\left|\Im\left(\sum_{|k|> n} \left( R_k(t,\lambda) - R_k(s,\lambda)\right)\right)\right| \le \frac{1}{1-\frac{\Lambda}{\omega}}\cdot|t-s|\cdot\frac{Q_n(t,s)}{n}.
\end{equation}

Notice that
\[\left|\Re\left(\sum_{|k|> n} \left( R_k(t,\lambda) - R_k(s,\lambda)\right)\right)\right| = \left|\sum_{|k|> n} \left(\frac{ 2\omega t\sin(\omega t)}{(\omega t)^2-(k\pi)^2}-\frac{ 2\omega s\sin(\omega s)}{(\omega s)^2-(k\pi)^2}\right)\right.\]
\[\left.\times \cos\left(k\pi\left(1-\frac{\lambda}{\omega}\right)\right)\right| \le 4\omega \sum_{k=n+1}^{+\infty} \Delta_k,
\]
where 
\[\Delta_k:= \left|\frac{ t\sin(\omega t)}{(\omega t)^2-(k\pi)^2}-\frac{ s\sin(\omega s)}{(\omega s)^2-(k\pi)^2}\right|.\]
By (\ref{sin}) we estimate $\Delta_k$ as follows
\begin{eqnarray}\Delta_k &\le& \left|\frac{ t\sin(\omega t)-s\sin(\omega s)}{(\omega t)^2-(k\pi)^2}\right|+ s\left|\frac{ 1}{(\omega t)^2-(k\pi)^2} - \frac{ 1}{(\omega s)^2-(k\pi)^2}  \right|\nonumber\\
&\le&  \frac{\omega^2s(s+t)|t-s|}{k^4 \pi^4(1-z)^2} + \frac{ |t-s|\cdot|\sin(\omega t)|+s|\sin(\omega t)-\sin(\omega s)|}{(1-z)(k\pi)^2}\\
&\le& 
\frac{\omega^2s(s+t)|t-s|}{k^4 \pi^4(1-z)^2}\nonumber+ |t-s|\,\frac{\omega s+1}{(1-z)(k\pi)^2} \le \frac{|t-s|}{4\omega k^2}\,W_n(t,s).\nonumber
\end{eqnarray}
Hence, we get
\begin{equation}\label{Dk2}\left|\Re\left(\sum_{|k|> n} \left( R_k(t,\lambda) - R_k(s,\lambda)\right)\right)\right| \le \frac{|t-s|}{n}\,W_n(t,s).
\end{equation}
Combining (\ref{Dk1}) and (\ref{Dk2}) we obtain
\[\left|\sum_{|k|> n} \left( R_k(t,\lambda) - R_k(s,\lambda)\right)\right|\le \frac{|t-s|}{n}\left(W_n(t,s)+\frac{Q_n(t,s)}{\left(1-\frac{\Lambda}{\omega}\right)}\right).\]

For the case $\lambda<0$ the proof is similar.

Finally,  analogously to  the derivations in (\ref{Dif}), one can deduce statement 2 of the theorem from (\ref{Rk1}). 
\end{proof}
\section{$\varphi$-sub-Gaussian random processes}
In their pioneering papers \cite{bel,pir} Belyaev and Piranashvili extended the deterministic sampling theory to classes of analytic stochastic processes. Almost all trajectories of these processes can be analytically continued. Recently, there have been  considerable efforts to develop the WKS sampling theory to new classes of stochastic processes. 

This section reviews the definition of $\varphi$-sub-Gaussian random processes and  their relevant properties. 

Tail distributions of sub-Gaussian random variables behave similarly to the Gaussian ones so that sample path properties of sub-Gaussian processes rely on their mean square regularity. One of the main classical tools  to study the boundedness
of sub-Gaussian processes was metric entropy integral estimates by Dudley \cite{dud0}.  These results were extended by Fernique \cite{fern} and Ledoux and Talagrand \cite{led} using the generic chaining (majorizing measures) method.  There is a rich and well-developed theory on bounding  sub-Gaussian random variables and processes, therefore below we cite only some key publications related to our approach.  Good introductions on bounding stochastic processes can be found in the classical monographs \cite{dud,  led1, led, tal1, tal2} and references therein. Regularity estimates under non-Gaussian assumptions were derived in \cite{csa}.  A novel approach based on  Malliavin derivatives was proposed in~\cite{vie}.

Some of these results can also be used to obtain bounds for Gaussian or sub-Gaussian random processes which are similar to the ones derived in this article. However, we employ specific results and methods for the $\varphi$-sub-Gaussian case. These methods are often simpler than the generic chaining or  Malliavin-derivative-based concentration results. Moreover, they are in ready-to-use forms for the considered sampling problems.

The space of $\varphi$-sub-Gaussian random variables  was introduced in the paper~\cite{koz4} to generalize the class of sub-Gaussian random variables defined in \cite{kah}.  Various properties of the space of $\varphi$-sub-Gaussian random variables were studied in the book  \cite{bul} and the article  \cite{ant}. More information on sub-Gaussian and $\varphi$-sub-Gaussian random processes and their applications can be found in the publications  \cite{bie, bul, ant1,  fer, kozvas,  yam}.

 \begin{definition} {\rm  \cite{kra}} A continuous even convex function $\varphi(x),$ $x\in {\mathbb R},$ is called
an Orlicz N-function, if it is monotonically increasing for $x>0$,
$\varphi(0)=0,$ $ {{\varphi(x)}/{x}}\to 0,$ when $x\to 0,$ and $ {{\varphi(x)}/{x}}\to\infty,$  when $x\to\infty. $
 \end{definition}

 \begin{definition}{\rm  \cite{kra}} Let $\varphi(x),x\in {\mathbb R},$ be an Orlicz N-function. The
function $\varphi^{*}(x):=\sup_{y\in {\mathbb R}}(xy-\varphi(y)),$ $x\in {\mathbb R},$ is called the
Young-Fenchel transform (also known as the Legendre transform) of $\varphi(\cdot).$  \end{definition}
The function $\varphi^{*}(\cdot)$ is also an Orlicz N-function.

 \begin{definition} {\rm \cite{kra}} An Orlicz N-function  $\varphi(\cdot)$ satisfies {\bf Condition Q}  if
\[\lim\limits_{x\rightarrow 0}{\varphi(x)}/{x^2}=C>0,\]
where the constant $C$ can be equal to $+\infty.$ 
 \end{definition}

 \begin{example} The following functions are N-functions that satisfy   Condition Q: \[\varphi(x)=C{|x|^{\alpha}},\ 1<\alpha\le2;\quad \varphi(x)=\exp\{Cx^2\}-1;\]
\[\varphi(x)=\begin{cases} Cx^2, &\mbox{if } |x|\le 1, \\
C|x|^{\alpha}, & \mbox{if } |x|> 1, \end{cases} \quad \alpha>2,\] 
where $C>0.$
 \end{example}

 \begin{lemma}\label{lem1} {\rm \cite{kra}} Let  $\varphi(\cdot)$  be an Orlicz N-function. Then it can be represented as
$\varphi(u)=\int_{0}^{|u|}f(v)\,dv,$ where $f(\cdot)$ is
a monotonically non-decreasing, right-continuous function, such that
$f(0)=0$ and $f(x)\to +\infty,$ when  $x\to +\infty.$   \end{lemma}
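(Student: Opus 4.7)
The plan is to exploit the convexity and monotonicity properties bundled into the definition of an Orlicz N-function, and to take $f$ to be the right derivative of $\varphi$ on $[0,\infty)$. Since $\varphi$ is even, it is enough to produce the representation for $u\ge 0$.

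First I would recall the standard fact from convex analysis: a convex function on an interval has a non-decreasing right derivative that is right-continuous, and the convex function can be recovered as the integral of this right derivative. Concretely, set $f(v):=\varphi'_{+}(v)$ for $v\ge 0$. Then $f$ is non-decreasing on $[0,\infty)$ and right-continuous. Because $\varphi$ is absolutely continuous on every finite subinterval of $[0,\infty)$, the fundamental theorem of calculus gives
\[\varphi(u)-\varphi(0)=\int_{0}^{u}f(v)\,dv,\qquad u\ge 0,\]
and since $\varphi(0)=0$ by the definition of an N-function, the claimed integral representation follows (for $u<0$ use evenness of $\varphi$ and the absolute value on the upper limit).

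Next I would verify the three boundary conditions on $f$. The monotonicity of $f$ yields $\varphi(x)=\int_{0}^{x}f(v)\,dv\ge x\,f(0)$ for $x>0$, so $0\le f(0)\le \varphi(x)/x$, and letting $x\to 0^{+}$ using the N-function property $\varphi(x)/x\to 0$ forces $f(0)=0$. For the behaviour at infinity, if $f$ were bounded by some constant $M$ on $[0,\infty)$, then $\varphi(x)\le Mx$ for all $x\ge 0$, contradicting $\varphi(x)/x\to\infty$; since $f$ is non-decreasing, unboundedness is equivalent to $f(x)\to+\infty$ as $x\to+\infty$.

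The argument is essentially structural rather than computational, and the only real subtlety is invoking the correct regularity statement for convex functions (existence and right-continuity of one-sided derivatives and the absolute continuity that underpins the integral representation). I expect this to be the single non-trivial ingredient, but it is a textbook fact and can simply be cited. Everything else reduces to rewriting the limit conditions $\varphi(x)/x\to 0$ at $0$ and $\varphi(x)/x\to\infty$ at $\infty$ in terms of $f$ via the integral representation.
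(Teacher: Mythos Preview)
Your argument is correct and is exactly the standard proof: take $f=\varphi'_{+}$, use the integral representation of a convex function via its right derivative, and read off $f(0)=0$ and $f(x)\to\infty$ from the N-function limits $\varphi(x)/x\to 0$ at $0$ and $\varphi(x)/x\to\infty$ at $\infty$. Note, however, that the paper does not supply its own proof of this lemma at all --- it is quoted as a known result from Krasnosel'skii and Rutickii \cite{kra} --- so there is nothing to compare against beyond observing that your sketch is the textbook argument underlying that citation.
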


Let $\{\Omega, \cal{B}, \mathbf {P}\}$ be a standard probability space and $L_p(\Omega)$ denote a space of random variables having finite $p$-th absolute moments.

 \begin{definition} {\rm \cite{ant, koz4}} Let $\varphi(\cdot)$ be an Orlicz
N-function satisfying the {Condition Q.} A zero mean random
variable $\xi$ belongs to the space $Sub_\varphi(\Omega)$ (the space
of $\varphi$-sub-Gaussian random variables), if there exists a
constant $a_{\xi}\geq 0$ such that the inequality $\mathbf
E\exp\left(\lambda\xi\right)\le
\exp\left(\varphi(a_{\xi}\lambda)\right)$ holds for all $\lambda\in
{\mathbb R}.$  \end{definition}
The space $Sub_\varphi(\Omega)$ is a Banach space with respect to
the norm (see  \cite{bul}) 
$$
\tau_\varphi \left( \xi \right): = \mathop {\sup }\limits_{\lambda
\ne 0} \frac{\varphi ^{\left( { - 1} \right)}\left( {\ln \mathbf
E\exp
\left\{ {\lambda \xi } \right\}} \right)}{\left| \lambda
\right|},
$$
where $\varphi ^{\left( { - 1} \right)}(\cdot)$ denotes the inverse function of $\varphi (\cdot).$

If  $\varphi(x)={x^2}/2$ then $Sub_\varphi(\Omega)$ is called a space of subgaussian random variables. This space was introduced in the article  \cite{kah}.

 \begin{definition} {\rm \cite{bul}} Let $\mathbf{T}$ be a parametric space. A random process $\mathbf
X(t),$ $t \in \mathbf{T},$ belongs to the space $Sub_{\varphi}(\Omega)$ if $\mathbf X(t)\in Sub_{\varphi}(\Omega)$ for
all $t\in \mathbf{T}.$ 
 \end{definition}
A Gaussian centered random process $\mathbf X(t),$ $t\in \mathbf{T},$ belongs to the space $Sub_{\varphi}(\Omega),$  where $\varphi(x)={x^2}/2$ and  $\tau_{\varphi}(\mathbf X(t)) =\left(\mathbf
E\left|\mathbf X(t)\right|^2\right)^{1/2}.$

 \begin{definition}\label{SSub} {\rm \cite{koz1}} A family $\Xi$ of random
variables $\xi\in Sub_{\varphi}(\Omega)$ is called strictly
$Sub_{\varphi}(\Omega)$ if there exists a constant $C_{\Xi}>0$
such that for any finite set $I,$ $\xi_{i}\in \Xi,$ $i\in I,$
and for arbitrary $\lambda_{i}\in {\mathbb R},$ $i\in I:$ 
\[\tau_{\varphi}\left(\sum\limits_{i \in
I}\lambda_{i}\xi_{i}\right)\leq
 C_{\Xi}\left(\mathbf E\left(\sum\limits_{i \in I}\lambda_{i}\xi_{i}\right)^2\right)^{1/2}.\]
  \end{definition}
$C_{\Xi}$ is called a determinative constant.
The strictly $Sub_{\varphi}(\Omega)$ family will be denoted by $SSub_{\varphi}(\Omega).$ 

 \begin{definition} {\rm \cite{koz1}} A $\varphi$-sub-Gaussian random
process $\mathbf X(t),$ $t\in \mathbf{T},$ is called strictly
$Sub_{\varphi}(\Omega)$ if the family of random variables $\{\mathbf
X(t),t\in \mathbf{T}\}$ is strictly $Sub_{\varphi}(\Omega).$ The determinative
constant of this family is called a determinative constant of the
process and denoted by $C_{\mathbf X}$.
 \end{definition}

A Gaussian centered random process $\mathbf X(t),$ $t\in \mathbf{T},$ is a 
$SSub_{\varphi}(\Omega)$ process, where $\varphi(x)={x^2}/2$ and the
determinative constant $C_{\mathbf X}=1.$

\section{Approximation in $L_p([0,T])$}\label{sec4}
This section presents results on the WKS approximation of $Sub_{\varphi}(\Omega)$ and $SSub_{\varphi}(\Omega)$ random processes in  $L_p([0,T])$ with a given accuracy and reliability. Various specifications of the general results are obtained for important scenarios. Notice, that the approximation in $L_p([0,T])$ investigates the closeness of trajectories of $\mathbf X(t)$ and $\mathbf X_n(t),$ see, e.g.,  \cite{koz0, kozol1, kozol2}. It is different from the known $L_p$-norm results which give the closeness of $\mathbf X(t)$ and $\mathbf X_n(t)$ distributions for each $t,$ see, e.g.,  \cite{he, ole1, ole0}. 

First, we state some auxiliary results that we need for Theorems~\ref{the3} and~\ref{the4}. 

Let $\{\mathbf{T},\mathfrak{S},\mu\}$ be a measurable space and $\mathbf X(t),$ $t\in \mathbf{T},$
be a random process from the space $Sub_{\varphi}(\Omega).$
We will use the following notation $\tau_{\varphi}(t):=\tau_{\varphi}\left(\mathbf X(t)\right)$ for the norm of $\mathbf X(t)$ in the space $Sub_{\varphi}(\Omega).$

There are some general results in the literature which can be used to obtain asymptotics of the tail of power functionals of sub-Gaussian processes, see, for example, \cite{bor}. In contrast to these asymptotic results, numerical sampling applications require non-asymptotic bounds with an explicit range over which they can be used. The following theorem provides such bounds for the case of $\varphi$-sub-Gaussian processes.

 \begin{theorem}\label{the1}{\rm \cite{koz0}}  Let $p\ge 1$ and  
\[c:=\int_\mathbf{T}
\left(\tau_{\varphi}(t)\right)^p\,d\mu(t)<\infty.\]  Then the integral $\int_\mathbf{T}
\left|\mathbf X(t)\right|^p\,d\mu(t)$ exists with probability {\rm 1} and the following
inequality holds

\begin{equation}\label{est}\mathbf P\left\{\int_\mathbf{T}
\left|\mathbf
X(t)\right|^p\,d\mu(t)>\varepsilon\right\}\le 2\exp\left\{-\varphi^*\left(\left({\varepsilon}/c\right)^{1/p}\right)\right\}
\end{equation}
for each non-negative
\begin{equation}\label{epsilon_ner}\varepsilon>c\cdot\left(f\left(p(c/\varepsilon)^{1/p}\right)\right)^p,\end{equation}
where $f(\cdot)$ is a density of $\varphi(\cdot)$ defined in  Lemma~{\rm\ref{lem1}.}
 \end{theorem}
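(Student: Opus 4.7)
The plan is to first verify that the integral $Y := \int_\mathbf{T}|\mathbf{X}(t)|^p\,d\mu(t)$ is almost surely finite, then derive the tail inequality by the moment (Chebyshev) method combined with a Laplace-type optimization over the moment order.

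For existence, I start from the defining $\varphi$-sub-Gaussian inequality $\mathbf{E}\exp(\lambda\mathbf{X}(t))\le\exp(\varphi(\tau_\varphi(t)\lambda))$ and apply the exponential Markov inequality (Chernoff's bound) to both $\mathbf{X}(t)$ and $-\mathbf{X}(t)$, obtaining the pointwise two-sided tail estimate
\[\mathbf{P}\{|\mathbf{X}(t)|>u\}\le 2\exp\bigl(-\varphi^*(u/\tau_\varphi(t))\bigr),\quad u\ge 0.\]
Integrating this against $p u^{p-1}\,du$ yields a uniform bound $\mathbf{E}|\mathbf{X}(t)|^p\le K_p(\tau_\varphi(t))^p$ with $K_p<\infty$ explicit. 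Tonelli's theorem then gives $\mathbf{E}Y\le K_p\,c<\infty$, so $Y<\infty$ with probability one.

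For the tail estimate I would normalize the problem by introducing the probability measure $d\nu(t):=\tau_\varphi(t)^p\,d\mu(t)/c$ and the rescaled process $Z(t):=\mathbf{X}(t)/\tau_\varphi(t)$, which satisfies $\tau_\varphi(Z(t))=1$ (points where $\tau_\varphi(t)=0$ contribute nothing to $Y$). Then $Y/c=\int_\mathbf{T}|Z(t)|^p\,d\nu(t)$, and Jensen's inequality applied to the convex map $x\mapsto x^q$ on the probability space $(\mathbf{T},\nu)$ for any real $q\ge 1$ gives $(Y/c)^q\le\int_\mathbf{T}|Z(t)|^{pq}\,d\nu(t)$. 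Combining with the universal $\varphi$-sub-Gaussian moment estimate $\mathbf{E}|Z(t)|^{pq}\le 2pq\int_0^\infty u^{pq-1}e^{-\varphi^*(u)}\,du=:M_{pq}$ yields $\mathbf{E}Y^q\le c^q M_{pq}$, and Chebyshev's inequality produces
\[\mathbf{P}\{Y>\varepsilon\}\le\inf_{q\ge 1}(c/\varepsilon)^q M_{pq}.\]

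The main obstacle is evaluating this infimum cleanly to obtain the exponent $\varphi^*((\varepsilon/c)^{1/p})$. The integral defining $M_{pq}$ is of Laplace type and concentrates around the saddle point $u^*$ satisfying $(\varphi^*)'(u^*)=(pq-1)/u^*$; optimising the total exponent $-q\ln(\varepsilon/c)+\ln M_{pq}$ in $q$ and invoking the Young--Fenchel identity $\varphi^*(f(x))=xf(x)-\varphi(x)$ with $f=\varphi'$ leads, after careful bookkeeping of constants, to the stated bound $2\exp(-\varphi^*((\varepsilon/c)^{1/p}))$. The explicit constraint $\varepsilon>c(f(p(c/\varepsilon)^{1/p}))^p$ is precisely the admissibility condition ensuring that the optimal $q$ in this Laplace argument lies in the range $q\ge 1$ where Jensen's inequality was applied: differentiating the exponent and using $f=\varphi'$ gives exactly this first-order stationarity relation. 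Verifying this admissibility range, together with matching the leading Laplace asymptotics to the compact form on the right-hand side, is the technical heart of the argument.
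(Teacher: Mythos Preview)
The paper does not prove Theorem~\ref{the1}: it is quoted from \cite{koz0} as an auxiliary tool, with no argument supplied here, so there is no proof in the present paper to compare against.

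On the merits of your outline: the existence part and the Chebyshev--Jensen reduction $\mathbf{P}\{Y>\varepsilon\}\le (c/\varepsilon)^q\sup_t\mathbf{E}|Z(t)|^{pq}$ are correct, and you are right that the constraint (\ref{epsilon_ner}) encodes $q\ge 1$. The gap is the Laplace/saddle-point step. Laplace's method is asymptotic: it identifies the exponent of $M_{pq}$ only up to an uncontrolled multiplicative prefactor, so ``after careful bookkeeping of constants'' is exactly where the proof is missing --- nothing in your sketch forces the final constant to be $2$ rather than something depending on $q$. The clean fix avoids $M_{pq}$ entirely. Use the elementary pointwise inequality $u^{r}\le (r/\lambda)^{r}e^{-r}e^{\lambda u}$ (exact for all $u,\lambda>0$, obtained by maximizing $u^r e^{-\lambda u}$) together with $\mathbf{E}e^{\lambda|Z(t)|}\le 2e^{\varphi(\lambda)}$ to get the \emph{non-asymptotic} moment bound $\mathbf{E}|Z(t)|^{r}\le 2(r/\lambda)^{r}e^{-r}e^{\varphi(\lambda)}$, which carries an extra free parameter $\lambda$. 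Substituting $r=pq$ into your Chebyshev--Jensen estimate and minimizing first over $r$ (optimum $r=\lambda(\varepsilon/c)^{1/p}$, value $e^{-\lambda(\varepsilon/c)^{1/p}}$) and then over $\lambda$ gives $2\exp\{-\varphi^*((\varepsilon/c)^{1/p})\}$ exactly, straight from the definition of the Young--Fenchel transform. The constraint $q\ge 1$ becomes $\lambda\ge p(c/\varepsilon)^{1/p}$; since the optimal $\lambda$ satisfies $f(\lambda)=(\varepsilon/c)^{1/p}$ with $f$ non-decreasing, this is equivalent to (\ref{epsilon_ner}). So your strategy is sound --- just replace the Laplace heuristic by this exact two-parameter bound.
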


 \begin{example} Let $\varphi(x)={|x|^{\alpha}}/\alpha,$ $
1<\alpha\le2.$ Then $f(x)=x^{\alpha-1}$ and
$\varphi^*(x)={|x|^{\gamma}}/\gamma,$ where $\gamma \ge 2$ and $1/{\alpha}+1/{\gamma}=1.$ Hence,
 inequality~{\rm(\ref{epsilon_ner})} can be rewritten as
\[\varepsilon>c\cdot\left(f\left(p(c/\varepsilon)^{1/p}\right)\right)^p=c^{\alpha}p^{(\alpha-1)p}\varepsilon^{1-\alpha}.\]
Therefore, it holds 
\begin{equation}\label{intX(t)gauss_ner}\mathbf P\left\{\int_\mathbf{T}
\left|\mathbf
X(t)\right|^p\,d\mu(t)>\varepsilon\right\}\le2\exp\left\{-\frac1\gamma\left(\frac{\varepsilon}{
c}\right)^{\gamma/p}\right\},
\end{equation}
when
$\varepsilon>c\cdot p^{\frac{\alpha-1}{\alpha}p}.$
\end{example}

\begin{example} If $\mathbf X(t),$ $t\in \mathbf{T},$ is a Gaussian centered random
process, then the inequality
\begin{equation}\label{est12}\mathbf P\left\{\int_\mathbf{T}
\left|\mathbf
X(t)\right|^p\,d\mu(t)>\varepsilon\right\}\le2\exp\left\{-\frac12\left(\frac{\varepsilon}{\tilde
c}\right)^{2/p}\right\}
\end{equation}
holds
true for $\varepsilon>\hat c\cdot p^{\frac{p}{2}},$ where $\hat
c:=\int_\mathbf{T}\left(\mathbf E\left(\mathbf
X(t)\right)^2\right)^{p/2}\,d\mu(t).$ 
 \end{example}
 
 \begin{example} Let $\mathbf X(t)$ be a centered bounded random variable for all $t\in \mathbf{T}.$ Then the process  $\mathbf X(t),$ $t\in \mathbf{T},$ belongs to all spaces $Sub_{\varphi}(\Omega)$ and satisfies  (\ref{est}), (\ref{intX(t)gauss_ner}), and (\ref{est12}). 
  \end{example}
  
   \begin{example}\label{example5} Let $\alpha \ge 2$ and
   \[\varphi(x)=\begin{cases} x^2/\alpha, &\mbox{if } |x|\le 1, \\
   |x|^{\alpha}/\alpha, & \mbox{if } |x|> 1. \end{cases}  \] 
   Then  $\varphi(x)$ is an Orlicz N-function satisfying the {Condition Q.}
   
Let,  for each $t\in \mathbf{T},$ $\mathbf X(t)$  be a two-sided Weibull random variable,  i.e.

\[\mathbf P\left\{\mathbf X(t)\ge x \right\}=\mathbf P\left\{\mathbf X(t) \le -x \right\}=\frac{1}{2}\exp\left\{-\frac{x^\alpha}{\alpha}\right\}, \quad x>0.\]  

Then  $\mathbf X(t),$ $t\in \mathbf{T},$ is a random process from the space $Sub_{\varphi}(\Omega)$ and Theorem~\ref{the1} holds true for
\[f(v)=\begin{cases} 2v/\alpha, &\mbox{if } |v| < 1, \\
   |v|^{\alpha-1}, & \mbox{if } |v|\ge 1, \end{cases} \quad \mbox{and}\quad \varphi^*(x)=\begin{cases} \alpha x^2/4, &\mbox{if } 0\le |x|\le {2}/{\alpha}, \\
      |x|-{1}/{\alpha}, &\mbox{if } {2}/{\alpha} < |x|\le 1, \\
      |x|^{\gamma}/\gamma, & \mbox{if } |x|> 1, \end{cases}  \] 
where $\gamma\in (1,2]$ and  $1/\alpha+1/\gamma=1.$  
    \end{example}

 \begin{theorem}\label{the3} Let $\omega>\Lambda>0,$ $n\ge \frac{\omega t}{\pi \sqrt{z}},$ $z\in(0,1).$ Let $\mathbf X(t),$ $t\in \mathbf{R},$ be a stationary
$SSub_{\varphi}(\Omega)$ process which spectrum is bandlimited to $[-\Lambda,\Lambda),$  
 $\mathbf X_n(t)$ be defined by {\rm(\ref{shan2})}, and 
\[S_{n,p}:=\left(\frac{C_{\mathbf X}}{n}\right)^p\int_0^T C_n^{p/2}(t)\,dt,\]
where $C_{\mathbf X}$ is a determinative constant of the process  $\mathbf X(t),$ $C_n(t)$ is given by {\rm(\ref{Cn}).}

Then,  $\int_0^T
\left|\mathbf X(t)-\mathbf X_n(t)\right|^p\,dt$ exists with probability {\rm 1} and  the following inequality holds true for $\varepsilon>S_{n,p}\cdot\left(f\left(p\,(S_{n,p}/\varepsilon)^{1/p}\right)\right)^p:$
\[\mathbf P\left\{\int_0^T
\left|\mathbf
X(t)-\mathbf
X_n(t)\right|^p\,dt>\varepsilon\right\}\le 2\exp\left\{-\varphi^*\left(\left({\varepsilon}/S_{n,p}\right)^{1/p}\right)\right\}.
\]
 \end{theorem}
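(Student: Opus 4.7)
The plan is to apply Theorem~\ref{the1} to the residual process $\mathbf{Y}_n(t):=\mathbf{X}(t)-\mathbf{X}_n(t)$ on the parameter space $\mathbf{T}=[0,T]$ equipped with Lebesgue measure, with the constant $c$ appearing there controlled explicitly by $S_{n,p}$. All the analytic content is already available from Theorems~\ref{the0} and~\ref{the1}; the work is to splice them together.

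First, for every fixed $t\in[0,T]$, the random variable $\mathbf{Y}_n(t)$ is a finite linear combination of $2n+2$ elements of the strictly $Sub_\varphi(\Omega)$ family $\{\mathbf{X}(s):s\in\mathbf{R}\}$, namely $\mathbf{X}(t)$ together with the sample values $\mathbf{X}(k\pi/\omega)$, $|k|\le n$. Definition~\ref{SSub} therefore produces
\[\tau_\varphi(\mathbf{Y}_n(t))\le C_{\mathbf{X}}\bigl(\mathbf{E}|\mathbf{Y}_n(t)|^2\bigr)^{1/2},\]
so in particular $\mathbf{Y}_n$ is a $Sub_\varphi(\Omega)$ process on $[0,T]$. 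The hypothesis on $n$ then lets me invoke part~1 of Theorem~\ref{the0} to obtain $\mathbf{E}|\mathbf{Y}_n(t)|^2\le C_n(t)/n^2$, and hence
\[\tau_\varphi(\mathbf{Y}_n(t))\le \frac{C_{\mathbf{X}}\sqrt{C_n(t)}}{n}.\]
Since $C_n(t)$ is polynomial in $t$, raising to the $p$-th power and integrating over $[0,T]$ gives
\[c:=\int_0^T\bigl(\tau_\varphi(\mathbf{Y}_n(t))\bigr)^p\,dt\le\left(\frac{C_{\mathbf{X}}}{n}\right)^p\int_0^T C_n^{p/2}(t)\,dt=S_{n,p}<\infty.\]

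With this bound on $c$ in hand, Theorem~\ref{the1} applied to $\mathbf{Y}_n$ yields both the almost sure existence of $\int_0^T|\mathbf{Y}_n(t)|^p\,dt$ and the tail bound $\mathbf{P}\{\int_0^T|\mathbf{Y}_n(t)|^p\,dt>\varepsilon\}\le 2\exp\{-\varphi^*((\varepsilon/c)^{1/p})\}$. Because $c\le S_{n,p}$ and $\varphi^*$ is a non-decreasing Orlicz N-function on $[0,\infty)$, the right-hand side is dominated by $2\exp\{-\varphi^*((\varepsilon/S_{n,p})^{1/p})\}$, which is the claimed probability bound. Finally, the admissibility region stated in the theorem, $\varepsilon>S_{n,p}\cdot(f(p(S_{n,p}/\varepsilon)^{1/p}))^p$, implies the analogous region with $c$ in place of $S_{n,p}$ required by Theorem~\ref{the1}, because the map $x\mapsto x\,(f(p(x/\varepsilon)^{1/p}))^p$ is non-decreasing in $x$ (both factors are non-decreasing, the second one since $f$ is monotone by Lemma~\ref{lem1}).

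The main obstacle is essentially bookkeeping rather than analytic: one must be careful with the direction of each monotonicity comparison when passing from $c$ to $S_{n,p}$, both inside the exponential tail estimate and inside the admissibility condition on $\varepsilon$, so that every inequality points the right way. A secondary point to be explicit about is that the hypothesis $n\ge\omega t/(\pi\sqrt{z})$ must be interpreted uniformly over $t\in[0,T]$ (equivalently $n\ge\omega T/(\pi\sqrt{z})$) so that Theorem~\ref{the0} applies at each $t$ in the integration range.
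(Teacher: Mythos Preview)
Your proof is correct and follows essentially the same route as the paper: apply Theorem~\ref{the1} to the residual process on $[0,T]$, bound the resulting constant $c$ by $S_{n,p}$ via Definition~\ref{SSub} and part~1 of Theorem~\ref{the0}, and then use the monotonicity of $\varphi^*$ and $f$ to replace $c$ by the larger $S_{n,p}$ in both the tail bound and the admissibility condition. Your write-up is in fact slightly more careful than the paper's, which asserts the monotonicity replacement in one line; you also make explicit the point that $n\ge\omega T/(\pi\sqrt{z})$ is needed so that Theorem~\ref{the0} applies at every $t\in[0,T]$, something the paper leaves implicit.
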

\begin{proof} It follows from (\ref{shan2}) and Definition~\ref{SSub} that $\mathbf X(t)-\mathbf X_n(t)$ is a $SSub_{\varphi}(\Omega)$ random process with  the determinative constant $C_{\mathbf X}.$ 

Applying  Theorem~\ref{the1} to $\mathbf X(t)-\mathbf X_n(t)$ for the case $\mathbf T=[0,T]$ and the Lebesgue measure $\mu$ on $[0,T]$ we obtain that $\int_0^T
\left|\mathbf X(t)-\mathbf X_n(t)\right|^p\,dt$ exists with probability {\rm 1} and
\[\mathbf P\left\{\int_0^T
\left|\mathbf
X(t)-\mathbf
X_n(t)\right|^p\,dt>\varepsilon\right\}\le 2\exp\left\{-\varphi^*\left(\left({\varepsilon}/c\right)^{1/p}\right)\right\},
\]
where $c:=\int_0^T \left(\tau_{\varphi}(\mathbf
X(t)-\mathbf
X_n(t))\right)^p\,dt.$

Notice that $\varphi^*\left(\cdot\right)$ and $f(\cdot)$ are  monotonically  non-decreasing. Therefore,  for any $\tilde{c}\ge c$ we obtain
\[\tilde{c}\cdot\left(f\left(p(\tilde{c}/\varepsilon)^{1/p}\right)\right)^p\ge c\cdot\left(f\left(p(c/\varepsilon)^{1/p}\right)\right)^p,\]
\[ \exp\left\{-\varphi^*\left(\left({\varepsilon}/c\right)^{1/p}\right)\right\}\le \exp\left\{-\varphi^*\left(\left({\varepsilon}/\tilde{c}\right)^{1/p}\right)\right\}.
\]

Hence, the statement of Theorem~\ref{the1} holds true if the constant $c$  in (\ref{est}) and (\ref{epsilon_ner}) is replaced by some $\tilde{c},$ $\tilde{c}\ge c.$ 
Now, by Definition~\ref{SSub} and part 1 of Theorem~\ref{the0} one can choose  $\tilde{c}=S_{n,p}$  which finishes the proof of the theorem.
\end{proof}  
\begin{example} Recalling that in the Gaussian case $\varphi^*(x)=|x|^2/2$ we obtain the following specification of the above theorem.

If $\mathbf X(t),$ $t\in \mathbf{R},$ is a Gaussian process, then for $\varepsilon>\hat{S}_{n,p}\cdot p^{p/2}$ it holds
\[\mathbf P\left\{\int_0^T
\left|\mathbf
X(t)-\mathbf
X_n(t)\right|^p\,dt>\varepsilon\right\}\le 2\exp\left\{-\frac{1}{2}\left(\frac{\varepsilon}{\hat{S}_{n,p}}\right)^{2/p}\right\},
\]
where \[\hat{S}_{n,p}:=n^{-p}\int_0^T C_n^{p/2}(t)\,dt.\]
\end{example}

\begin{example} Let $\mathbf B(\tau)$ be a covariance function that corresponds to a bandlimited spectrum and has the following Mercer's representation 
\[\mathbf B(t-s)=\mathbf E\mathbf X(t) \mathbf X(s) = \sum_{j=1}^\infty \lambda_j \, e_j(s) \, e_j(t),\quad t,s\in \mathbf{R},\]
where $\lambda_j$ and $e_j(s)$ are eigenvalues and eigenfunctions,  respectively, associated to $\mathbf B(t,s).$ 

Let us define the corresponding stochastic process $\mathbf X(t),$ $t\in \mathbf{R},$ using the Karhunen-Lo\'{e}ve type expansion 
\[\mathbf X(t)= \sum_{j=1}^\infty \xi_j  e_j(t),\]
where $\xi_j, j\ge 1,$ are independent identically distributed random variables from the space $Sub_{\varphi}(\Omega).$ If $\varphi(\sqrt{x})$ is a convex function, then  $\mathbf X(t),$ $t\in \mathbf{R},$ is a $SSub_{\varphi}(\Omega)$ stochastic process, see \cite{koz1}.

For example, let $\xi_j, j\ge 1,$ be two-sided Weibull random variables defined in Example~\ref{example5}. Then Theorem~\ref{the3} holds true provided that the functions $f(v)$ and $\varphi^*(x)$ are selected as in Example~\ref{example5}. 
\end{example}

 \begin{definition}\label{def8} We say that  $\mathbf X_n$ approximates $\mathbf X$  in $L_p([0,T])$ with accuracy $\varepsilon>0$ and reliability $1-\delta,$ $0<\delta < 1,$ if 
\[\mathbf P\left\{\int_0^T \left|\mathbf X(t)-\mathbf X_n(t)\right|^p\,dt >\varepsilon\right\}\le \delta.\]
 \end{definition}

Using Definition~\ref{def8}  and Theorem~\ref{the3} we get the following result.
 \begin{theorem}\label{the4}  Let $\mathbf X(t),$ $t\in \mathbf{R},$ be a stationary
$SSub_{\varphi}(\Omega)$ process with a bounded spectrum. Then $\mathbf X_n$  approximates $\mathbf X$  in $L_p([0,T])$ with accuracy $\varepsilon$ and reliability $1-\delta$ if the following inequalities hold true
\[\label{eps1}\varepsilon>S_{n,p}\cdot\left(f\left(p\,(S_{n,p}/\varepsilon)^{1/p}\right)\right)^p, 
\]
\[
 \exp\left\{-\varphi^*\left(\left({\varepsilon}/S_{n,p}\right)^{1/p}\right)\right\}\le \delta/2.
\]
 \end{theorem}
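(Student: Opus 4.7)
The plan is to recognize that Theorem~\ref{the4} is essentially a direct corollary of Theorem~\ref{the3} together with Definition~\ref{def8}, so the proof amounts to a careful chaining of three inequalities rather than any new analytic argument. First I would note that the hypotheses of Theorem~\ref{the4} (stationary $SSub_{\varphi}(\Omega)$ process, bounded spectrum $[-\Lambda,\Lambda)$) are exactly those needed for Theorem~\ref{the3}, provided that the truncation level $n$ and the parameter $z\in(0,1)$ hidden in $C_n(t)$ (and therefore in $S_{n,p}$) satisfy the admissibility condition $n\ge \omega t/(\pi\sqrt z)$ for $t\in[0,T]$, which is implicit in the use of the quantity $S_{n,p}$ in the statement.

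Next I would invoke Theorem~\ref{the3}: the first inequality
\[
\varepsilon > S_{n,p}\cdot \bigl(f(p\,(S_{n,p}/\varepsilon)^{1/p})\bigr)^p
\]
is exactly the range condition required there, so applying the theorem gives existence of $\int_0^T |\mathbf X(t)-\mathbf X_n(t)|^p\,dt$ with probability one together with the bound
\[
\mathbf P\Bigl\{\int_0^T |\mathbf X(t)-\mathbf X_n(t)|^p\,dt>\varepsilon\Bigr\}
\le 2\exp\bigl\{-\varphi^{*}((\varepsilon/S_{n,p})^{1/p})\bigr\}.
\]

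Then I would combine this with the second hypothesis
\[
\exp\bigl\{-\varphi^{*}((\varepsilon/S_{n,p})^{1/p})\bigr\}\le \delta/2,
\]
which, after multiplication by $2$, yields
\[
\mathbf P\Bigl\{\int_0^T |\mathbf X(t)-\mathbf X_n(t)|^p\,dt>\varepsilon\Bigr\}\le \delta.
\]
By Definition~\ref{def8}, this is precisely the statement that $\mathbf X_n$ approximates $\mathbf X$ in $L_p([0,T])$ with accuracy $\varepsilon$ and reliability $1-\delta$, which completes the proof.

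The main obstacle here is not technical but rhetorical: one must make clear that the two conditions in the theorem correspond, respectively, to the admissibility range in Theorem~\ref{the3} and to the quantitative tail bound in the same theorem, and that the replacement of the generic constant $c$ in Theorem~\ref{the1} by $S_{n,p}$ was already justified inside the proof of Theorem~\ref{the3} using the monotonicity of $\varphi^*$ and $f$. Beyond stating this chain of implications, no further calculation is needed.
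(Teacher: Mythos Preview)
Your proposal is correct and matches the paper's approach exactly: the paper does not even write out a proof, simply stating ``Using Definition~\ref{def8} and Theorem~\ref{the3} we get the following result,'' which is precisely the chain of implications you describe. Your write-up is, if anything, more explicit than the paper's own treatment.
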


\begin{cor}\label{cor2}  If $\mathbf X(t),$ $t\in \mathbf{R},$ is a Gaussian process, $\mathbf X_n$  approximates $\mathbf X$  in $L_p([0,T])$ with accuracy $\varepsilon$ and reliability $1-\delta$ if
\begin{equation}\label{Snp}\hat{S}_{n,p}<\frac{\varepsilon}{\max\left(p^{p/2},\left(2\ln(2/\delta)\right)^{p/2}\right)}.\end{equation}
\end{cor}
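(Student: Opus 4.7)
The plan is to specialize Theorem~\ref{the4} to the Gaussian setting, where the objects $f(\cdot)$, $\varphi^*(\cdot)$ and the constant $C_{\mathbf X}$ become explicit, and show that the two conditions of Theorem~\ref{the4} collapse into the single inequality~(\ref{Snp}).

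First, I would recall that a centered Gaussian process lies in $SSub_\varphi(\Omega)$ with $\varphi(x)=x^2/2$ and determinative constant $C_{\mathbf X}=1$, as noted after Definition~\ref{SSub}. With this choice, $f(v)=v$ is the density of $\varphi$ (via Lemma~\ref{lem1}), $\varphi^*(x)=x^2/2$, and $S_{n,p}=\hat S_{n,p}$ by the definitions in Theorem~\ref{the3} and the subsequent Gaussian example.

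Second, I would rewrite each of the two hypotheses of Theorem~\ref{the4} under these substitutions. The range condition $\varepsilon>S_{n,p}(f(p(S_{n,p}/\varepsilon)^{1/p}))^p$ becomes $\varepsilon>\hat S_{n,p}\cdot p^p\cdot \hat S_{n,p}/\varepsilon$, i.e.\ $\varepsilon^2>p^p\hat S_{n,p}^2$, which after taking a square root is equivalent to $\hat S_{n,p}<\varepsilon/p^{p/2}$. The reliability condition $\exp\{-\varphi^*((\varepsilon/S_{n,p})^{1/p})\}\le\delta/2$ becomes $\exp\{-\tfrac{1}{2}(\varepsilon/\hat S_{n,p})^{2/p}\}\le\delta/2$; taking logarithms and solving for $\hat S_{n,p}$ gives $\hat S_{n,p}\le\varepsilon/(2\ln(2/\delta))^{p/2}$.

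Finally, I would observe that both inequalities hold simultaneously precisely when $\hat S_{n,p}$ is bounded by the smaller of the two right-hand sides, which is equivalent to (\ref{Snp}). Since both simplifications are purely algebraic manipulations using the identities $\varphi(x)=\varphi^*(x)=x^2/2$ and $f(v)=v$, there is no real obstacle; the only point requiring minor care is keeping track of the exponents $1/p$ and $p/2$ and noting that the strict inequality in the first condition of Theorem~\ref{the4} is preserved by the (stricter) inequality in (\ref{Snp}).
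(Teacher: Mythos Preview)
Your proof is correct and follows exactly the route intended by the paper: Corollary~\ref{cor2} is simply the Gaussian specialization of Theorem~\ref{the4}, using $\varphi(x)=\varphi^*(x)=x^2/2$, $f(v)=v$, and $C_{\mathbf X}=1$, so that $S_{n,p}=\hat S_{n,p}$. Your algebraic reductions of the two conditions to $\hat S_{n,p}<\varepsilon/p^{p/2}$ and $\hat S_{n,p}\le\varepsilon/(2\ln(2/\delta))^{p/2}$, and their combination via the maximum in the denominator, are precisely what the paper's Examples~2--3 and the statement of the corollary implicitly rely on.
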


The next example illustrates an application of the above results for determining the number of terms in the WKS expansions to ensure the approximation of $\varphi$-sub-Gaussian processes with given accuracy and reliability.
 \begin{example}\label{ex2}
Let $p\ge 1$ in Corollary~{\rm\ref{cor2}}. Then by part~{\rm 1}  of Theorem~{\rm\ref{the0}}, for arbitrary $z\in (0,1)$ and $n \ge \frac{\omega T}{\pi \sqrt{z}},$ we get the following estimate
\[\hat{S}_{n,p}\le \left(\frac{\sqrt{\mathbf B(0)}}{n}\right)^{p}\int_0^T \left(A_1t+A_0\right)^pdt\le \frac{\left(\mathbf B(0)\right)^{p/2}T \left(A_1T+A_0\right)^p}{n^p}.\]
where $A_1:=\frac{4\omega}{\pi^2(1-z)}$ and $A_0:=\frac{4(z+2)}{\pi(1-z)^2(1-\Lambda/\omega)}.$

Hence, to guarantee {\rm(\ref{Snp})} for given $p,$ $\varepsilon$ and $\delta$ it is enough to choose an  $n$ such that the following inequality holds true
\[\frac{\left(\mathbf B(0)\right)^{p/2}T \left(A_1T+A_0\right)^p}{n^p}\le \frac{\varepsilon}{\max\left(p^{p/2},\left(2\ln(2/\delta)\right)^{p/2}\right)}\]
for $z=\frac{\omega^2 T^2}{\pi^2n^2}<1.$

For example, for $p=2,$ $T=B(0)=\omega=1,$ and $\Lambda=3/4$ the number of terms $n$ as a function of  $\varepsilon$ and $\delta$ is shown in Figure~{\rm\ref{fig:1}}. It is clear that $n$ increases when $\varepsilon$ and $\delta$ approach~0. However, for reasonably small $\varepsilon$ and $\delta$ we do not need too many sampled values.
\begin{figure}[htb]
\begin{center}
\includegraphics[width=8cm,height=8cm,trim=2mm 1mm 2mm 2mm,clip]{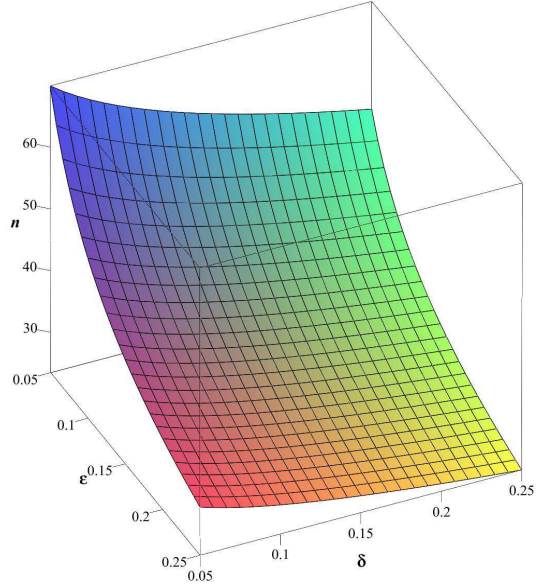}
\caption{The number of terms to ensure specified accuracy and reliability}
\label{fig:1}
\end{center}
\end{figure}

Now, for fixed $\varepsilon$ and $\delta$ Figure~{\rm\ref{fig:2}} illustrates the behaviour of the number of terms $n$ as a function of the parameter~$p\in[1,2].$  The plot was produced using the values $T=B(0)=\omega=1,$ $\Lambda=3/4,$ and $\varepsilon=\delta=0.1.$   
\begin{figure}[htb]
\begin{center}
\includegraphics[width=9cm,height=7cm,trim=2mm 1mm 2mm 2mm,clip]{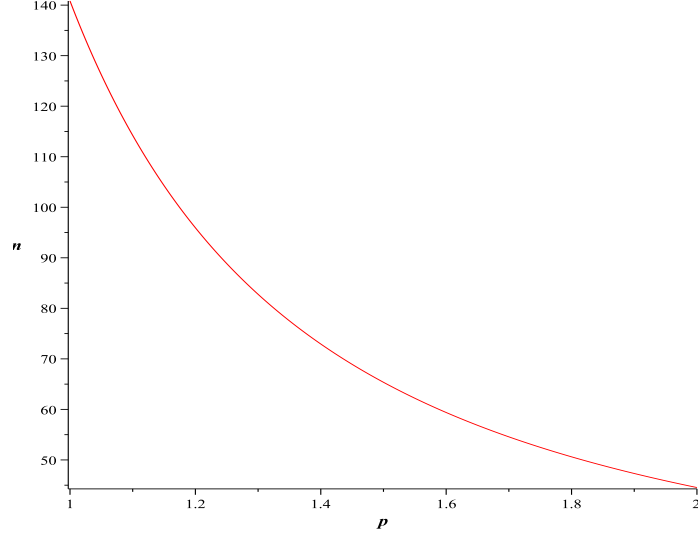}
\caption{The number of terms as a function of $p$}
\label{fig:2}
\end{center}
\end{figure}
 \end{example}

\section{Uniform approximation}

Most of stochastic sampling results commonly seen in the literature concern the mean-square convergence, but various practical applications require uniform convergence. To give an adequate description of the performance of sampling approximations in both cases, for points where the processes are relatively smooth and points where spikes occur, one can use the uniform distance instead of the mean-square one. 
The development of uniform stochastic approximation methods is one of
frontiers in applications of stochastic sampling theory to  modern functional data analysis.

In this section we present results on uniform truncation error upper
bounds appearing in the approximation $\mathbf X(t)\approx \mathbf X_n(t)$  of $Sub_{\varphi}(\Omega)$ and $SSub_{\varphi}(\Omega)$ random processes. We also give some specifications of the general results for which the assumptions can be easily verified.  

Let $\mathbf X(t),$ $t\in \mathbf{T},$ be a $\varphi$-subgaussian random process. It generates the pseudometrics $\rho_{\mathbf X}(t,s)=\tau_{\varphi}(\mathbf X(t)-\mathbf X(s))$  on $\mathbf{T}.$ Let the pseudometric space  $(\mathbf{T},\rho_{\mathbf X})$ be separable, $\mathbf X$ be a separable process, and $\varepsilon_0:=\sup_{t\in \mathbf{T}} \tau_{\varphi}(t)<+\infty.$ 

\begin{definition}{\rm \cite{bul}} Let $N(v)$ denote the smallest number of elements in an $v$-covering of $\mathbf{T},$  i.e. the smallest number of closed balls $B_i,$ $i\in I,$ of diameters at most $2v$ and such that $\cup_{i \in I}B_i=\mathbf{T}.$ The function  $N(v),$ $v>0,$ is called the metric massiveness of the space $\mathbf{T}$ with respect to the pseudometric $\rho_{\mathbf X}.$ The function $H(v):=\ln N(v)$  is called the metric entropy of the space $\mathbf{T}$ with respect to the pseudometric $\rho_{\mathbf X}.$ 
\end{definition}

 Note that the function  $N(v)$ coincides with the number of point in a minimal $v$ net covering the space $\mathbf{T}$ and can be equal $+\infty.$ 
 
 Entropy methods to study the metric massiveness of function calsses and spaces play an important role in modern approximation theory. Various properties and numerous examples of the metric massiveness and the metric entropy can be found in \cite[\S 3.2]{bul}.

 \begin{theorem}\label{the5}{\rm\cite{bul}}  Let $r(x),$ $x\ge 1,$ be a non-negative, monotone increasing function such that the function $r\left(e^x\right),$  $x\ge 1,$ is convex and 
\[I_r(v):=\int_0^vr(N(v))dv<+\infty,\]
where $N(v)$ is the massiveness of the pseudometric space $(\mathbf{T},\rho_{\mathbf X}).$

Then, for all $\lambda >0,$ $0<\theta <1,$ it holds
\begin{equation}\label{Q}\mathbf E \exp\left\{\lambda \sup_{t\in \mathbf T}|\mathbf X(t)|\right\}\le 2 Q(\lambda,\theta)
\end{equation}
and 
\begin{equation}\label{A}\mathbf P\left\{\sup_{t\in \mathbf T}|\mathbf X(t)|\ge u\right\}\le 2 A(\theta,u),
\end{equation}
where
\[Q(\lambda,\theta):= \exp\left\{\varphi\left(\frac{\lambda \varepsilon_0}{1-\theta}\right)\right\}\, r^{(-1)}\left(\frac{I_r(\theta \varepsilon_0)}{\theta \varepsilon_0}\right),\]
\[A(\theta,u):= \exp\left\{-\varphi^*\left(\frac{u(1-\theta)}{ \varepsilon_0}\right)\right\}\, r^{(-1)}\left(\frac{I_r(\theta \varepsilon_0)}{\theta \varepsilon_0}\right).\]
 \end{theorem}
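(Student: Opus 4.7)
The plan is to prove Theorem~\ref{the5} by a chaining argument in the pseudometric $\rho_{\mathbf X}$. Using separability of both $\mathbf X$ and $(\mathbf T,\rho_{\mathbf X})$, I would first reduce the inequalities (\ref{Q}) and (\ref{A}) to an arbitrary finite subset $\mathbf T_N\subset \mathbf T$ and then pass to the monotone limit, so I may assume $\mathbf T$ is finite throughout the construction.

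Set $v_k:=\theta^k\varepsilon_0$ for $k\ge 0$ and pick minimal $v_k$-nets $\mathbf T^{(k)}\subset\mathbf T$ with $|\mathbf T^{(k)}|\le N(v_k)$. Since $\mathrm{diam}_{\rho_{\mathbf X}}(\mathbf T)\le 2\varepsilon_0$, $\mathbf T^{(0)}$ may be taken to be a single point $t_0$. Letting $\pi_k(t)\in\mathbf T^{(k)}$ be a nearest point to $t$, the telescope $\mathbf X(t)-\mathbf X(t_0)=\sum_{k\ge 1}(\mathbf X(\pi_k(t))-\mathbf X(\pi_{k-1}(t)))$ is finite, with $\tau_\varphi(\mathbf X(\pi_k(t))-\mathbf X(\pi_{k-1}(t)))\le 2v_{k-1}$ by the triangle inequality for $\rho_{\mathbf X}$.

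For the moment generating function estimate (\ref{Q}), I would apply the defining $\varphi$-sub-Gaussian inequality $\mathbf E\exp(\mu\xi)\le\exp(\varphi(\mu\tau_\varphi(\xi)))$ to each chain increment and then use a union bound over the at most $N(v_k)N(v_{k-1})\le N(v_k)^2$ pairs that arise at level $k$. A weighted splitting $\lambda=\sum_k\lambda_k$ with $\lambda_k$ proportional to $v_{k-1}$ is chosen so that $\sum_k 2\lambda_k v_{k-1}=\lambda\varepsilon_0/(1-\theta)$; this produces the factor $\exp\{\varphi(\lambda\varepsilon_0/(1-\theta))\}$ from the summed $\varphi$-contributions. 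The counting factors $N(v_k)$ are then collected using the assumption that $r(e^x)$ is convex: by Jensen's inequality and a Riemann-sum comparison against the geometric steps $v_{k-1}-v_k$, the discrete sum $\sum_k(v_{k-1}-v_k)\,r(N(v_k))$ is bounded by the integral $I_r(\theta\varepsilon_0)$, and after dividing by $\theta\varepsilon_0$ and inverting $r$ one recovers the factor $r^{(-1)}(I_r(\theta\varepsilon_0)/(\theta\varepsilon_0))$. The factor $2$ on the right-hand side of (\ref{Q}) comes from symmetrization via $\sup_t|\mathbf X(t)|\le \sup_t\mathbf X(t)\vee\sup_t(-\mathbf X(t))$ and the identical distribution of $\pm\mathbf X$.

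Inequality (\ref{A}) will then follow from (\ref{Q}) by the Chernoff step $\mathbf P\{\sup_t|\mathbf X(t)|\ge u\}\le e^{-\lambda u}\mathbf E\exp(\lambda\sup_t|\mathbf X(t)|)$, where optimization over $\lambda>0$ invokes the Young--Fenchel duality $\varphi^*(y)=\sup_\lambda(\lambda y-\varphi(\lambda))$ applied with $y=u(1-\theta)/\varepsilon_0$, exactly replacing $\varphi(\lambda\varepsilon_0/(1-\theta))$ by $\varphi^*(u(1-\theta)/\varepsilon_0)$ in the exponent. The main obstacle is the middle step: one must simultaneously (i) arrange the weights $\lambda_k$ so that a single $\lambda$ threads through all chain levels with the sharp constant $1/(1-\theta)$, and (ii) convert the discrete entropy counts on a geometric grid into the continuous functional $I_r(\theta\varepsilon_0)$ without incurring a spurious logarithmic loss; the convexity assumption on $r(e^x)$ is precisely what is needed to make this passage exact and is the reason the auxiliary function $r$ is introduced at all.
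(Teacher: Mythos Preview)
Your overall architecture matches the paper's: geometric chaining at scales $v_k=\theta^k\varepsilon_0$, Jensen via convexity of $r(e^x)$, a Riemann-sum comparison to $I_r$, and Chernoff/Young--Fenchel for the tail. The place where the proposal is not yet a proof is the step you describe as ``a weighted splitting $\lambda=\sum_k\lambda_k$.'' For dependent increments you cannot distribute $\lambda$ additively inside the moment generating function; the correct device is H\"older's inequality with exponents $q_k$ satisfying $\sum_k q_k^{-1}\le 1$, giving
\[
\mathbf E\exp\Bigl\{\lambda\sum_k S_k\Bigr\}\le \prod_k\bigl(\mathbf E\exp\{\lambda q_k S_k\}\bigr)^{1/q_k}.
\]
The paper (quoting \cite{bul}) takes $q_k=1/(\theta^{k-1}(1-\theta))$, so that $\lambda q_k v_{k-1}=\lambda\varepsilon_0/(1-\theta)$ is constant in $k$ and $\sum_k q_k^{-1}=1$; this is exactly what produces the single factor $\exp\{\varphi(\lambda\varepsilon_0/(1-\theta))\}$ and, simultaneously, the convex-combination weights $\theta^{k-1}(1-\theta)$ needed for Jensen applied to $r(e^{\cdot})$. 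Your proposed identity $\sum_k 2\lambda_k v_{k-1}=\lambda\varepsilon_0/(1-\theta)$ is false under $\lambda_k\propto v_{k-1}$ with $\sum_k\lambda_k=\lambda$ (it gives $2\lambda\varepsilon_0/(1+\theta)$), which signals that the ``splitting'' heuristic has to be replaced by H\"older.

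Two smaller points. First, with your increment bound $2v_{k-1}$ and pair count $N(v_k)^2$ you would not recover the constants in the statement; the inequality the paper imports from \cite{bul} already has $v_{k-1}$ and a single $N(v_k)$. Second, your Riemann comparison goes the wrong way: since $v\mapsto r(N(v))$ is nonincreasing, $\sum_k(v_{k-1}-v_k)\,r(N(v_k))$ \emph{overestimates} the integral on $[0,\varepsilon_0]$. The paper shifts one index, using $r(N(v_k))\le (v_k-v_{k+1})^{-1}\int_{v_{k+1}}^{v_k} r(N(v))\,dv$, which is precisely what produces the denominator $\theta\varepsilon_0$ rather than $\varepsilon_0$ in $r^{(-1)}\bigl(I_r(\theta\varepsilon_0)/(\theta\varepsilon_0)\bigr)$.
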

Below we give a proof of Theorem~\ref{the5} which corrects the version  with mistakes and the missing proof which appeared in  \cite[page~107]{bul}. 
\begin{proof}
We will use  the following inequality from  \cite[page~103]{bul}
\[ \mathbf E \exp\left\{\lambda \sup_{t\in \mathbf T}|\mathbf X(t)|\right\}\le \prod_{k=1}^{\infty}\left[2N(\theta^k\varepsilon_0)\cdot  \exp\left\{\varphi\left(\lambda q_k \theta^{k-1}\varepsilon_0\right)\right\} \right]^{1/q_k},\]
where $(q_k)_{k=1}^\infty$ is a sequence satisfying the inequality $\sum_{k=1}^\infty q_k^{-1}\le 1.$

It is easily seen that
\[\label{qk1} \mathbf E \exp\left\{\lambda \sup_{t\in \mathbf T}|\mathbf X(t)|\right\}\le 2^{\sum_{k=1}^{\infty}q_k^{-1}}\exp \left\{ \sum_{k=1}^{\infty}  \frac{H(\theta^k\varepsilon_0)}{q_k}+ \frac{\varphi\left(\lambda q_k \theta^{k-1}\varepsilon_0\right)}{q_k}\right\} ,\]
where $H(v)$ is the metric entropy of the pseudometric space $(\mathbf{T},\rho_{\mathbf X}).$

Let $q_k={1}/{\theta^{k-1}(1-\theta)}.$ Then it follows from the convexity of $r(e^x)$ that
\[ \mathbf E \exp\left\{\lambda \sup_{t\in \mathbf T}|\mathbf X(t)|\right\}\le 2\exp \left\{ \sum_{k=1}^{\infty} \theta^{k-1}(1-\theta) H(\theta^k\varepsilon_0)+ \varphi\left(\frac{\lambda\varepsilon_0}{1-\theta}\right)\sum_{k=1}^{\infty} \theta^{k-1}(1-\theta)\right\}\]
\[=2 \exp \left\{ \varphi\left(\frac{\lambda\varepsilon_0}{1-\theta}\right)\right\}\cdot r^{(-1)}\left(r\left(\exp \left\{ \sum_{k=1}^{\infty} \theta^{k-1}(1-\theta) H(\theta^k\varepsilon_0)\right\}\right)\right) \]
\[\le 2 \exp \left\{ \varphi\left(\frac{\lambda\varepsilon_0}{1-\theta}\right)\right\}\cdot r^{(-1)}\left( \sum_{k=1}^{\infty} \theta^{k-1}(1-\theta)\cdot r\left(N(\theta^k\varepsilon_0)\right)\right). \]
From the estimate
\[r\left(N(\theta^k\varepsilon_0)\right)\le \frac{1}{\varepsilon_0\theta^{k}(1-\theta)} \int_{\theta^{k+1}\varepsilon_0}^{\theta^{k}\varepsilon_0}r(N(v))dv\]
we deduce that
\[ \sum_{k=1}^{\infty} \theta^{k-1}(1-\theta)\cdot r\left(N(\theta^k\varepsilon_0)\right)\le \frac{1}{\theta\varepsilon_0} \int_{0}^{\theta\varepsilon_0}r(N(v))dv.\]
The above estimates imply the inequality (\ref{Q}).

To prove the inequality (\ref{A}) we note that by (\ref{Q}) for all $\lambda>0$
\begin{eqnarray}\mathbf P\left\{\sup_{t\in \mathbf T}|\mathbf X(t)|\ge u\right\} &\le& \frac{\mathbf E \exp\left\{\lambda\sup_{t\in \mathbf T}|\mathbf X(t)|\right\}}{\exp(\lambda u)}\le 2\,r^{(-1)}\left(\frac{I_r(\theta \varepsilon_0)}{\theta \varepsilon_0}\right)
\nonumber\\
&\times&\exp\left\{-\lambda u+ \varphi\left(\frac{\lambda \varepsilon_0}{1-\theta}\right)\right\}. \nonumber
\end{eqnarray}
By the definition of the Young-Fenchel transform we get
\[\inf_{\lambda \ge 0}\left(-\lambda u+ \varphi\left(\frac{\lambda \varepsilon_0}{1-\theta}\right)\right)=-\sup_{\lambda \ge 0}\left(\frac{\lambda\varepsilon_0}{1-\theta}\cdot\frac{u(1-\theta)}{\varepsilon_0}- \varphi\left(\frac{\lambda \varepsilon_0}{1-\theta}\right)\right)=-\varphi^*\left(\frac{u(1-\theta)}{ \varepsilon_0}\right).\]
This proves the inequality (\ref{A}). 
\end{proof}

 \begin{theorem}\label{the6} Let $\mathbf X(t),$ $t\in [0,T],$ be a separable $\varphi$-subgaussian random process such that $\sup_{t\in [0,T]} \tau_{\varphi}(t)<+\infty$ and 
\begin{equation}\label{S2} \sup_{|t-s|\le h}\tau_\varphi(\mathbf X(t)-\mathbf X(s))\le \sigma(h),
\end{equation} 
where $\sigma(h),$ $h\ge 0,$ is a monotone increasing continuous function such that $\sigma(0)=0.$ Let $r(\cdot)$ be the function introduced in Theorem~{\rm\ref{the5}}.

If 
\[\tilde{I}_r(v):=\int_0^vr\left(\frac{T}{2\sigma^{(-1)}(u)}+1\right)du<+\infty,\]
then for any $\theta\in(0,1)$ and $\varepsilon>0$
\[\mathbf P\left\{\sup_{t\in [0,T]}|\mathbf X(t)|\ge \varepsilon\right\}\le 2 \tilde{A}(\theta,\varepsilon),\]
where 
\[\tilde{A}(\theta,\varepsilon):= \exp\left\{-\varphi^*\left(\frac{\varepsilon(1-\theta)}{ \varepsilon_0}\right)\right\}\, r^{(-1)}\left(\frac{\tilde{I}_r(\theta \varepsilon_0)}{\theta \varepsilon_0}\right).\]
 \end{theorem}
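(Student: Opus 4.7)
The plan is to reduce Theorem~\ref{the6} to Theorem~\ref{the5} by producing a concrete upper bound on the metric massiveness $N(v)$ of $([0,T],\rho_{\mathbf X})$ from the uniform modulus-of-continuity estimate (\ref{S2}). Everything else is monotonicity bookkeeping.

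First I would control $N(v)$ in terms of $\sigma^{(-1)}$. Since $\sigma(\cdot)$ is continuous, monotone increasing, and vanishes at zero, its inverse $\sigma^{(-1)}$ is well defined on the image of $\sigma$. For any $v>0$, if two points $t,s\in[0,T]$ satisfy $|t-s|\le\sigma^{(-1)}(v)$, then by (\ref{S2}) we have $\rho_{\mathbf X}(t,s)=\tau_\varphi(\mathbf X(t)-\mathbf X(s))\le\sigma(\sigma^{(-1)}(v))=v$. Covering $[0,T]$ by closed intervals of length $2\sigma^{(-1)}(v)$ (i.e.\ $\rho_{\mathbf X}$-balls of radius $v$, hence of diameter at most $2v$) therefore requires at most $\lceil T/(2\sigma^{(-1)}(v))\rceil$ intervals, so
\[
N(v)\ \le\ \frac{T}{2\,\sigma^{(-1)}(v)}+1.
\]

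Next I would transfer this to the entropy integral. Because $r$ is non-negative and monotone increasing (hypothesis of Theorem~\ref{the5}), the pointwise bound above gives
\[
I_r(v)\ =\ \int_0^v r(N(u))\,du\ \le\ \int_0^v r\!\left(\frac{T}{2\,\sigma^{(-1)}(u)}+1\right)du\ =\ \tilde I_r(v),
\]
so the assumption $\tilde I_r(\theta\varepsilon_0)<\infty$ implies $I_r(\theta\varepsilon_0)<\infty$ and Theorem~\ref{the5} applies. Consequently, for the chosen $\theta\in(0,1)$,
\[
\mathbf P\!\left\{\sup_{t\in[0,T]}|\mathbf X(t)|\ge\varepsilon\right\}\le 2\exp\!\left\{-\varphi^{*}\!\left(\frac{\varepsilon(1-\theta)}{\varepsilon_0}\right)\right\} r^{(-1)}\!\left(\frac{I_r(\theta\varepsilon_0)}{\theta\varepsilon_0}\right).
\]

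Finally, since $r$ is monotone increasing its inverse $r^{(-1)}$ is monotone non-decreasing, so $r^{(-1)}\!\left(I_r(\theta\varepsilon_0)/(\theta\varepsilon_0)\right)\le r^{(-1)}\!\left(\tilde I_r(\theta\varepsilon_0)/(\theta\varepsilon_0)\right)$, and this replacement yields exactly the claimed bound $2\tilde A(\theta,\varepsilon)$.

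The only delicate point I anticipate is the covering estimate: one must make sure that balls of $\rho_{\mathbf X}$-radius $v$ centered at a uniform net with Euclidean spacing $\sigma^{(-1)}(v)$ really cover $[0,T]$ (which uses only that $\rho_{\mathbf X}(t,s)$ depends on $|t-s|$ through the monotone majorant $\sigma$) and that one avoids off-by-one issues — the $+1$ in $\tilde I_r$ accommodates the ceiling. Everything else is a monotonicity argument combined with a direct appeal to Theorem~\ref{the5}.
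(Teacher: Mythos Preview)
Your proposal is correct and follows essentially the same route as the paper: the paper's proof consists of noting that $([0,T],\rho_{\mathbf X})$ is separable, recording the covering bound $N(u)\le T/(2\sigma^{(-1)}(u))+1$, and then invoking Theorem~\ref{the5}. You have simply made explicit the monotonicity steps (for $r$ and $r^{(-1)}$) that the paper leaves implicit in the phrase ``the statement of the theorem follows from Theorem~\ref{the5}.''
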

\begin{proof} Notice that the space $([0,T],\rho_{\mathbf X}(t,s))$ is separable. Also,  the next inequality holds true
\[N(u)\le \frac{T}{2\sigma^{(-1)}(u)}+1.\]
Hence, the statement of the theorem follows from Theorem~\ref{the5}.
\end{proof}

\begin{rem} In \cite{vie} Malliavin derivatives were applied to derive some upper bounds  similar to the results in Theorems~\ref{the5} and~\ref{the6}.  However, these bound can not be directly compared with the results in Theorems~\ref{the5} and~\ref{the6} as they are valid only for a range of values of $\varepsilon$ which is separated from 0. 
\end{rem}
Let $\alpha, \gamma\in (1,\infty)$ satisfy $1/\alpha+1/\gamma=1.$
 \begin{example}\label{ex3} Let $\varphi(x)=|x|^\alpha/\alpha,\ 1<\alpha\le2.$ Then
\[\tilde{A}(\theta,\varepsilon)= \exp\left\{-\frac{\varepsilon^\gamma(1-\theta)^\gamma}{\gamma\, \varepsilon_0^\gamma}\right\}\, r^{(-1)}\left(\frac{\tilde{I}_r(\theta \varepsilon_0)}{\theta \varepsilon_0}\right).\]
 \end{example}
 \begin{example}\label{ex4} Let $\sigma(h)=Ch^\kappa,$ $0<\kappa\le 1,$ and $r(v)=(v-1)^\beta,$ $0<\beta<\kappa.$ Then $\sigma^{(-1)}(u)=(u/C)^{1/\kappa},$ $r^{(-1)}(v)=v^{1/\beta}+1,$ and 
\[\tilde{I}_r(v)=\int_0^v \left(\frac{C^{1/\kappa}T}{2u^{1/\kappa}}\right)^{\beta}du =\left(\frac{C^{1/\kappa}T}{2}\right)^{\beta}\left(1-\frac{\beta}{\kappa}\right)^{-1}v^{1-\beta/\kappa}.\]
Hence,
\[r^{(-1)}\left(\frac{\tilde{I}_r(\theta \varepsilon_0)}{\theta \varepsilon_0}\right)=
\frac{C^{1/\kappa}T}{2}\left(1-\frac{\beta}{\kappa}\right)^{-1/\beta}\left(\theta \varepsilon_0\right)^{-1/\kappa}+1\]
and
\[\tilde{A}(\theta,\varepsilon)= \exp\left\{-\varphi^*\left(\frac{\varepsilon(1-\theta)}{ \varepsilon_0}\right)\right\}\, \left(\frac{C^{1/\kappa}T}{2}\left(1-\frac{\beta}{\kappa}\right)^{-1/\beta}\left(\theta \varepsilon_0\right)^{-1/\kappa}+1\right).\]
If $\beta \to 0,$ then $\left(1-\frac{\beta}{\kappa}\right)^{1/\beta}\to e^{-1/\kappa}$ and we obtain the inequality 
\begin{equation}\label{tildeA}\tilde{A}(\theta,\varepsilon) \le \exp\left\{-\varphi^*\left(\frac{\varepsilon(1-\theta)}{ \varepsilon_0}\right)\right\}\, \left(\frac{T}{2}\left(\frac{e\,C}{\theta \varepsilon_0}\right)^{1/\kappa}+1\right).\end{equation}
 \end{example}
\begin{rem} Note that the particular form of $\sigma(h)$ in Example~\ref{ex4} guarantees H\"{o}lder continuity of sample paths of the stochastic process $\mathbf X.$  However, H\"{o}lder exponents  may be different for different functions $\varphi .$
\end{rem}

 \begin{example}\label{ex5} Let $\varphi(x)=|x|^\alpha/\alpha,\ 1<\alpha\le2,$ $\sigma(h)=Ch^\kappa,$ $0<\kappa\le 1,$ and $r(v)=(v-1)^\beta,$ $0<\beta<\kappa.$  Then, by Examples~{\rm\ref{ex3}} and~{\rm\ref{ex4}} it follows that
\[\tilde{A}(\theta,\varepsilon) \le \exp\left\{-\frac{\varepsilon^\gamma(1-\theta)^\gamma}{ \gamma\,\varepsilon_0^\gamma}\right\}\, \left(\frac{T}{2}\left(\frac{e\,C}{\theta \varepsilon_0}\right)^{1/\kappa}+1\right).\]
Let now $\theta={\varepsilon_0}/{\varepsilon}.$ Then for $\varepsilon>\varepsilon_0$  we obtain  $\theta<1$ and
\[\mathbf P\left\{\sup_{t\in [0,T]}|\mathbf X(t)|\ge \varepsilon\right\}\le 2 \exp\left\{-\frac{1}{\gamma}\left(\frac{\varepsilon}{\varepsilon_0}-1\right)^\gamma \right\}\, \left(\frac{T}{2}\left(\frac{e\,\varepsilon\,C}{\varepsilon_0^2}\right)^{1/\kappa}+1\right).\]
 \end{example}

 \begin{theorem}\label{the7} Let $\mathbf X(t),$ $t\in [0,T],$ be a separable $SSub_{\varphi}(\Omega)$ random process whose spectrum is bandlimited to $[-\Lambda,\Lambda).$ Let the  truncated restoration sum  $\mathbf X_n(t)$ for the  process  $\mathbf X(t)$ is given by {\rm(\ref{shan2})}. Then, for any $\theta\in(0,1),$ $\varepsilon >0,$ and such values of $n$ that $z^*:=\frac{\omega^2 T^2}{n^2\pi^2}< 1:$ 
\[\mathbf P\left\{\sup_{t\in [0,T]}|\mathbf X(t)-\mathbf X_n(t)|\ge \varepsilon\right\}\le  \exp\left\{-\varphi^*\left(\frac{\varepsilon(1-\theta)}{ C_n}\right)\right\}\, \left(\frac{eTC_{\mathbf X}\sqrt{b_n}}{2n\theta C_n}+1\right),\]
where 
$C_{\mathbf X}$ is the determinative constant of the process $\mathbf X(t)$, $b_n:=b_n(T,T)$ is given by~{\rm(\ref{bn})} evaluated at $z=z^*,$
\[C_n:= \frac{C_{\mathbf X}\mathbf B(0)}{n} \cdot\left(\frac{4\omega T}{\pi^2 (1-z^*)}+\frac{4\left(z^*+1+\frac{1}{n}\right)}{\pi (1-z^*)^2\left(1-\frac{\Lambda}{\omega}\right)} \right)^2.\] 
 \end{theorem}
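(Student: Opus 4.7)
The plan is to apply Theorem~\ref{the6} (specialised via Example~\ref{ex4}) to the deviation process $\mathbf Y_n(t):=\mathbf X(t)-\mathbf X_n(t)$ on $[0,T]$. The first task is to set up the required $\varphi$-sub-Gaussian hypotheses. Since $\mathbf X_n(t)$ is, for each fixed $t$, a \emph{finite} linear combination of values $\mathbf X(k\pi/\omega)$, the family $\{\mathbf Y_n(t),\, t\in [0,T]\}$ is itself strictly $\varphi$-sub-Gaussian with determinative constant $C_{\mathbf X}$ (by Definition~\ref{SSub}). Consequently, for every $t,s\in [0,T]$,
\[
\tau_\varphi(\mathbf Y_n(t))\le C_{\mathbf X}\bigl(\mathbf E|\mathbf Y_n(t)|^2\bigr)^{1/2},\qquad
\tau_\varphi(\mathbf Y_n(t)-\mathbf Y_n(s))\le C_{\mathbf X}\bigl(\mathbf E|\mathbf Y_n(t)-\mathbf Y_n(s)|^2\bigr)^{1/2}.
\]
This is the bridge that converts the $L^2$-estimates of Theorem~\ref{the0} into the $\tau_\varphi$-estimates Theorem~\ref{the6} demands.

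Next I would plug in Theorem~\ref{the0} with $z=z^*=\omega^2T^2/(n^2\pi^2)$. The hypothesis $z^*<1$ is precisely the one stated, and it implies both $n\ge \omega t/(\pi\sqrt{z^*})$ and $n\ge \omega\max(t,s)/(\pi\sqrt{z^*})$ for every $t,s\in[0,T]$, so parts~1 and~2 apply uniformly over the interval. From part~1, $\sup_{t\in[0,T]}\mathbf E|\mathbf Y_n(t)|^2\le C_n(T)/n^2$, which yields an upper bound $\varepsilon_0\le \sup_t\tau_\varphi(\mathbf Y_n(t))$ that is absorbed into the quantity $C_n$ appearing in the theorem statement. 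From part~2, together with the evident monotonicity $b_n(t,s)\le b_n(T,T)=b_n$ for $t,s\in[0,T]$, I obtain the Lipschitz-type increment bound
\[
\tau_\varphi(\mathbf Y_n(t)-\mathbf Y_n(s))\le \frac{C_{\mathbf X}\sqrt{b_n}}{n}\,|t-s|,
\]
i.e.\ condition~(\ref{S2}) with modulus of continuity $\sigma(h)=Ch$ of H\"older exponent $\kappa=1$ and constant $C=C_{\mathbf X}\sqrt{b_n}/n$.

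With these inputs, Theorem~\ref{the6} applies to $\mathbf Y_n$ on $[0,T]$. Choosing the entropy function $r(v)=(v-1)^\beta$ and passing to the limit $\beta\to 0$ as in Example~\ref{ex4} produces exactly the estimate (\ref{tildeA}) for $\tilde A(\theta,\varepsilon)$. Specialising that formula with $\kappa=1$, $C=C_{\mathbf X}\sqrt{b_n}/n$, and with $\varepsilon_0$ controlled via $C_n$ gives
\[
r^{(-1)}\!\left(\frac{\tilde I_r(\theta\varepsilon_0)}{\theta\varepsilon_0}\right)\le \frac{T}{2}\cdot\frac{e\,C}{\theta\varepsilon_0}+1=\frac{eTC_{\mathbf X}\sqrt{b_n}}{2n\theta\,C_n}+1,
\]
and the exponential factor becomes $\exp\{-\varphi^{*}(\varepsilon(1-\theta)/C_n)\}$, which is the claimed bound.

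The main obstacle is the bookkeeping of constants: one has to verify that the $L^2$-bounds from Theorem~\ref{the0} (each holding under a condition involving $z$) combine consistently at the single choice $z=z^*$, and that the monotonicity $b_n(t,s)\le b_n(T,T)$ is justified term by term from the definitions of $W_n$ and $Q_n$. Everything else is a direct composition of earlier results.
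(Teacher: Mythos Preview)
Your proposal is correct and follows essentially the same route as the paper: show $\mathbf Y_n$ is $SSub_\varphi(\Omega)$ with constant $C_{\mathbf X}$, feed the $L^2$ bounds from Theorem~\ref{the0} (at $z=z^*$, using monotonicity of $C_n(t)$ and $b_n(t,s)$ in their arguments) into the $\tau_\varphi$ estimates, and then apply Theorem~\ref{the6} via the specialisation in Example~\ref{ex4} with $\kappa=1$, $C=C_{\mathbf X}\sqrt{b_n}/n$, replacing $\varepsilon_0$ by the larger bound $C_n$. The only slip is the line ``$\varepsilon_0\le \sup_t\tau_\varphi(\mathbf Y_n(t))$''---you mean $\varepsilon_0=\sup_t\tau_\varphi(\mathbf Y_n(t))\le C_n$---but the subsequent use is correct.
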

 \begin{proof}  It follows from (\ref{shan2}) and Definition~\ref{SSub} that $\mathbf Y_n(t)=\mathbf X(t)-\mathbf X_n(t)$ is a $SSub_{\varphi}(\Omega)$ random process with  the determinative constant $C_{\mathbf X}.$ Hence, by Definition~\ref{SSub} and an application of part 1 of  Theorem~\ref{the0} to $\mathbf Y_n(t)$ we get  
 \[\tilde{\varepsilon}_0=\sup_{t\in [0,T]}\tau_{\varphi}\left(\mathbf Y_n(t)\right)\le C_{\mathbf X} \sup_{t\in [0,T]} \left(\mathbf E\mathbf Y_n^2(t)\right)^{1/2}\le \frac{C_{\mathbf X}}{n}\sup_{t\in [0,T]}\sqrt{C_n(t)}.\]
 
 Notice, that it follows from $n \ge \frac{\omega t}{\pi \sqrt{z}}$ in part 1 of Theorem~\ref{the0} and (\ref{Cn}) that $C_n(t)$ is an increasing function of $T$ and $z.$ Therefore, \[\sup_{t\in [0,T]}{C_n(t)}=\mathbf B(0)\cdot\sup_{0<z\le z*}  \left(\frac{4\omega T}{\pi^2 (1-z)}+\frac{4\left(z+1+\frac{1}{n}\right)}{\pi (1-z)^2\left(1-\frac{\Lambda}{\omega}\right)} \right)^2=\frac{n\,C_n}{C_{\mathbf X}}\]
 and
$\tilde{\varepsilon}_0 \le C_n.$  

By Definition~\ref{SSub} and an application of part 2 of  Theorem~\ref{the0} to $\mathbf Y_n(t)$ we get
\[ \sup_{|t-s|\le h}\tau_\varphi(\mathbf Y_n(t)-\mathbf Y_n(s))\le C_{\mathbf X} \sup_{|t-s|\le h}  \left(\mathbf E\left|\mathbf Y_n(t)-\mathbf Y_n(s)\right|^2\right)^{1/2}\]
\[\le C_{\mathbf X}\sup_{|t-s|\le h}
\frac{|t-s|}{n} \sqrt{b_n(t,s)}.
\] 
It follows from (\ref{bn}) that $b_n(t,s)$ is an increasing function of its arguments $t,s,$ and parameter~$z.$ Hence, $\sup_{|t-s|\le h}
b_n(t,s)\le b_n(T,T)\le b_n$ and the condition~(\ref{S2}) of Theorem~\ref{the6} is satisfied for the function $\sigma(h)=C_{\mathbf X}\sqrt{b_n}\cdot h/n.$ Therefore, we can apply the result (\ref{tildeA}) where $\kappa=1$ and $C=C_{\mathbf X}\sqrt{b_n}/n.$

Analogously to the proof of Theorem~\ref{the3} one can show that the upper bound remains valid if the constant $\varepsilon_0$ in the expression $\tilde{A}(\theta,\varepsilon)$ is replaced by a larger value. Hence,  an application of Theorem~\ref{the6} to $\mathbf Y_n(t)$ and the above estimates give

\[\tilde{A}(\theta,\varepsilon)\le  \exp\left\{-\varphi^*\left(\frac{\varepsilon(1-\theta)}{ C_n}\right)\right\}\, \left(\frac{eTC_{\mathbf X}\sqrt{b_n}}{2n\theta C_n}+1\right) \]
which completes the proof.
 \end{proof}
 \begin{cor}  Let  $\varphi(x)=|x|^\alpha/\alpha,\ 1<\alpha\le2,$ in Theorem~{\rm\ref{the7}}. Then, by Example~{\rm\ref{ex5}} for $\varepsilon > C_n$  it holds
 \[\mathbf P\left\{\sup_{t\in [0,T]}|\mathbf X(t)-\mathbf X_n(t)|\ge \varepsilon\right\}\le 2 \exp\left\{-\frac{1}{\gamma}\left(\frac{\varepsilon}{C_n}-1\right)^\gamma \right\}\, \, \left(\frac{\varepsilon eTC_{\mathbf X}\sqrt{b_n}}{2n C^2_n}+1\right).\]
  \end{cor}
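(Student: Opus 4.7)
The plan is to realize this theorem as a direct application of Theorem~\ref{the6} (more precisely, of its Lipschitz specialization from Example~\ref{ex4}) to the reconstruction error process $\mathbf{Y}_n(t) := \mathbf{X}(t) - \mathbf{X}_n(t)$, using Theorem~\ref{the0} to supply the two quantitative inputs that Theorem~\ref{the6} requires: a uniform bound $\tilde{\varepsilon}_0$ on $\tau_\varphi(\mathbf{Y}_n(t))$ over $[0,T]$, and a modulus of continuity $\sigma(h)$ for the increments of $\mathbf{Y}_n$.

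First I would observe that $\mathbf{X}_n(t)$ is a finite linear combination of the samples $\mathbf{X}(k\pi/\omega)$, so $\mathbf{Y}_n$ is itself a $SSub_\varphi(\Omega)$ process with the same determinative constant $C_{\mathbf{X}}$. By Definition~\ref{SSub} this yields $\tau_\varphi(\mathbf{Y}_n(t)) \leq C_{\mathbf{X}}(\mathbf{E}\mathbf{Y}_n^2(t))^{1/2}$, and an analogous inequality for the increments. Combined with part~1 of Theorem~\ref{the0} (applicable because the hypothesis $n\geq \omega t/(\pi\sqrt{z})$ is satisfied for every $t\in[0,T]$ at $z=z^*$), this gives $\tau_\varphi(\mathbf{Y}_n(t)) \leq C_{\mathbf{X}}\sqrt{C_n(t)}/n$. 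From the explicit form (\ref{Cn}), the function $C_n(t)$ is increasing both in $t$ and in the parameter $z$, so the supremum over $t\in[0,T]$ is attained at $t=T$, $z=z^*$, and we obtain $\tilde{\varepsilon}_0 \leq C_n$.

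Next I would derive the modulus of continuity. Part~2 of Theorem~\ref{the0} together with the $SSub_\varphi$ property yields $\tau_\varphi(\mathbf{Y}_n(t)-\mathbf{Y}_n(s)) \leq C_{\mathbf{X}}|t-s|\sqrt{b_n(t,s)}/n$. The explicit formula (\ref{bn}) shows that $b_n(t,s)$ is increasing in $t$, $s$ and $z$, so $\sup_{|t-s|\leq h,\,s,t\in[0,T]}\sqrt{b_n(t,s)} \leq \sqrt{b_n}$, and we may take $\sigma(h) = (C_{\mathbf{X}}\sqrt{b_n}/n)\,h$. This places us precisely in the Lipschitz case of Example~\ref{ex4} with $\kappa=1$ and $C = C_{\mathbf{X}}\sqrt{b_n}/n$.

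Finally, I would apply inequality (\ref{tildeA}) with these data, replacing $\varepsilon_0$ by its upper bound $C_n$. To justify the replacement I would repeat the monotonicity argument used in the proof of Theorem~\ref{the3}: since $\varphi^*$ is monotone increasing, the factor $\exp\{-\varphi^*(\varepsilon(1-\theta)/\varepsilon_0)\}$ grows when $\varepsilon_0$ is enlarged, and likewise $(eC/(\theta\varepsilon_0))^{1/\kappa}$ is monotone decreasing in $\varepsilon_0$, so $\tilde{A}(\theta,\varepsilon)$ in its entirety only grows when $\varepsilon_0$ is replaced by a larger quantity. Substituting $\kappa=1$, $C=C_{\mathbf{X}}\sqrt{b_n}/n$ and $\varepsilon_0\to C_n$ then produces exactly the stated bound. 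The main obstacle is not conceptual but a careful bookkeeping of these monotonicities: one must confirm that $C_n(t)$ is genuinely increasing over the range of $t$ and $z$ we care about, that $b_n(t,s)$ is jointly monotone in its arguments, and that the reduction $b_n(t,s) \to b_n(T,T)$ with $z=z^*$ is compatible with the hypothesis $n\geq\omega\max(t,s)/(\pi\sqrt{z})$ of part~2 of Theorem~\ref{the0}.
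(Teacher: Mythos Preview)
Your plan rederives Theorem~\ref{the7} rather than invoking it, which is fine, but you stop one step short of the Corollary. After the substitutions $\kappa=1$, $C=C_{\mathbf X}\sqrt{b_n}/n$, $\varepsilon_0\to C_n$ in~(\ref{tildeA}), what you obtain is
\[
\exp\left\{-\varphi^*\!\left(\frac{\varepsilon(1-\theta)}{C_n}\right)\right\}\left(\frac{eTC_{\mathbf X}\sqrt{b_n}}{2n\theta C_n}+1\right),
\]
which is the conclusion of Theorem~\ref{the7} and still contains the free parameter $\theta$. The Corollary is the further specialization carried out in Example~\ref{ex5}: use $\varphi^*(x)=|x|^\gamma/\gamma$ and set $\theta=C_n/\varepsilon$. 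That choice forces $\varepsilon>C_n$ (so that $\theta<1$), turns $\varepsilon(1-\theta)/C_n$ into $\varepsilon/C_n-1$, and turns $1/(\theta C_n)$ into $\varepsilon/C_n^2$, giving precisely the stated bound. Your proposal never mentions $\varphi^*$ explicitly and never selects~$\theta$, so the claim that the substitution ``produces exactly the stated bound'' is not correct as written.

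There is also a slip in your monotonicity argument. You correctly observe that $\exp\{-\varphi^*(\varepsilon(1-\theta)/\varepsilon_0)\}$ increases in $\varepsilon_0$ while $(eC/(\theta\varepsilon_0))^{1/\kappa}$ decreases in $\varepsilon_0$, but from one factor increasing and the other decreasing you cannot conclude that the product increases. The paper sidesteps this in the proof of Theorem~\ref{the7} by the phrase ``analogously to the proof of Theorem~\ref{the3}''; the underlying reason the replacement $\varepsilon_0\to C_n$ is legitimate is that the chaining bound of Theorem~\ref{the5} is valid for any starting scale at least $\sup_t\tau_\varphi(\mathbf Y_n(t))$, not merely for the exact supremum, so one may apply it afresh with $C_n$ in place of $\varepsilon_0$ rather than argue monotonicity of the resulting expression.
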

It follows from the definition of $C_n$  that $C_n\sim 1/n,$ when $n\to \infty.$ Hence, for a fixed value of $\varepsilon$ the right-hand side of the above inequality vanishes when $n$ increases.

Similarly to Section~\ref{sec4} one can define the uniform approximation of $\mathbf X(t)$ with a given accuracy  and reliability.

\begin{definition}\label{def9}  $\mathbf X_n(t)$ uniformly approximates $\mathbf X(t)$   with accuracy $\varepsilon>0$ and reliability $1-\delta,$ $0<\delta < 1,$ if 
\[\mathbf P\left\{\sup_{t\in [0,T]}|\mathbf X(t)-\mathbf X_n(t)| >\varepsilon\right\}\le \delta.\]
 \end{definition}

By Definition~\ref{def9}  and Theorem~\ref{the7} we obtain the following result.
 \begin{theorem}\label{the8}  Let $\mathbf X(t),$ $t\in \mathbf{R},$ be a separable
$SSub_{\varphi}(\Omega)$ process with a bounded spectrum, $\theta\in(0,1),$ $\varepsilon >0,$ and  $n$ is such an positive integer number that $z^*:=\frac{\omega^2 T^2}{n^2\pi^2}< 1.$  Then,  $\mathbf X_n(t)$  uniformly approximates $\mathbf X(t)$  with accuracy $\varepsilon$ and reliability $1-\delta$ if the following inequality holds true
\[
\exp\left\{-\varphi^*\left(\frac{\varepsilon(1-\theta)}{ C_n}\right)\right\}\, \left(\frac{eTC_{\mathbf X}\sqrt{b_n}}{2n\theta C_n}+1\right)\le \delta.
\]
 \end{theorem}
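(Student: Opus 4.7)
The plan is straightforward: the statement is simply the translation of Theorem~\ref{the7} into the accuracy/reliability language of Definition~\ref{def9}. First I would recall that, by Definition~\ref{def9}, $\mathbf X_n(t)$ uniformly approximates $\mathbf X(t)$ on $[0,T]$ with accuracy $\varepsilon$ and reliability $1-\delta$ if and only if
\[\mathbf P\left\{\sup_{t\in [0,T]}|\mathbf X(t)-\mathbf X_n(t)| >\varepsilon\right\}\le \delta.\]
Next I would verify that the hypotheses of Theorem~\ref{the8}---separability of $\mathbf X(t)$, its being a $SSub_{\varphi}(\Omega)$ process with spectrum bandlimited to $[-\Lambda,\Lambda)$, the choice $\theta\in(0,1)$, $\varepsilon>0$, and the restriction $z^*:=\omega^2 T^2/(n^2\pi^2)<1$---are exactly what is required to apply Theorem~\ref{the7} to the truncation error process $\mathbf X(t)-\mathbf X_n(t)$.

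Applying Theorem~\ref{the7} then yields the explicit upper bound
\[\mathbf P\left\{\sup_{t\in [0,T]}|\mathbf X(t)-\mathbf X_n(t)|\ge \varepsilon\right\}\le \exp\left\{-\varphi^*\!\left(\frac{\varepsilon(1-\theta)}{C_n}\right)\right\}\left(\frac{eTC_{\mathbf X}\sqrt{b_n}}{2n\theta C_n}+1\right),\]
and passing from ``$\ge \varepsilon$'' to ``$>\varepsilon$'' by the monotonicity of probability shows that the same quantity dominates the probability appearing in Definition~\ref{def9}. Requiring this right-hand side to be at most $\delta$ is precisely the inequality assumed in Theorem~\ref{the8}, so the desired reliability bound follows at once.

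Since the substantive analytic work---the spectral-representation estimates of Theorem~\ref{the0} and the metric-entropy/massiveness machinery of Theorems~\ref{the5}--\ref{the6}---has already been packaged inside Theorem~\ref{the7}, I do not anticipate any real obstacle; the theorem is a direct corollary, and its proof amounts to the bookkeeping described above.
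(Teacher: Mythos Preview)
Your proposal is correct and matches the paper's approach exactly: the paper simply states that Theorem~\ref{the8} follows from Definition~\ref{def9} and Theorem~\ref{the7}, which is precisely the bookkeeping you describe.
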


\begin{cor}\label{cor3}  Let  $\varphi(x)=|x|^\alpha/\alpha,\ 1<\alpha\le2,$ in Theorem~{\rm\ref{the8}}. Then,  $\mathbf X_n(t)$  uniformly approximates $\mathbf X(t)$  with accuracy $\varepsilon$ and reliability $1-\delta$ if $\varepsilon > C_n$ and 
\[ \exp\left\{-\frac{1}{\gamma}\left(\frac{\varepsilon}{C_n}-1\right)^\gamma \right\}\, \, \left(\frac{\varepsilon eTC_{\mathbf X}\sqrt{b_n}}{2n C^2_n}+1\right)< \delta/2.\] 
\end{cor}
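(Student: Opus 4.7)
The plan is to obtain Corollary~3 as a direct specialization of Theorem~\ref{the8} (equivalently, by combining the corollary to Theorem~\ref{the7} with Definition~\ref{def9}) under the choice $\varphi(x)=|x|^\alpha/\alpha$, $1<\alpha\le 2$. The single substantive input is the Young--Fenchel transform for this $\varphi$: a routine calculation (or a direct appeal to Example~\ref{ex3}) gives $\varphi^{*}(x)=|x|^\gamma/\gamma$, where $\gamma$ is the H\"{o}lder conjugate satisfying $1/\alpha+1/\gamma=1$.

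First I would invoke the corollary following Theorem~\ref{the7}, which already specializes the bound in Theorem~\ref{the7} to the present $\varphi$ by means of Example~\ref{ex5}. Concretely, for every $\varepsilon>C_n$ one has
\[
\mathbf P\left\{\sup_{t\in [0,T]}|\mathbf X(t)-\mathbf X_n(t)|\ge \varepsilon\right\}\le 2 \exp\left\{-\frac{1}{\gamma}\left(\frac{\varepsilon}{C_n}-1\right)^\gamma \right\}\left(\frac{\varepsilon\, eTC_{\mathbf X}\sqrt{b_n}}{2n C^2_n}+1\right),
\]
with $C_n$ and $b_n$ as defined in Theorem~\ref{the7}.

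Next I would use Definition~\ref{def9}: uniform approximation with accuracy $\varepsilon$ and reliability $1-\delta$ amounts to the bound $\mathbf P\{\sup_{t\in[0,T]}|\mathbf X(t)-\mathbf X_n(t)|>\varepsilon\}\le \delta$. A sufficient condition is therefore that the right-hand side of the displayed inequality be at most $\delta$, i.e.\ that
\[
\exp\left\{-\frac{1}{\gamma}\left(\frac{\varepsilon}{C_n}-1\right)^\gamma \right\}\left(\frac{\varepsilon\, eTC_{\mathbf X}\sqrt{b_n}}{2n C^2_n}+1\right)\le \delta/2.
\]
The corollary states this with a strict inequality, which is clearly stronger, so it implies the requirement. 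Together with the hypothesis $\varepsilon>C_n$ (needed so that the exponent in Example~\ref{ex5} is indeed negative and the bound applies), this finishes the argument.

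There is essentially no obstacle here; the only point worth verifying carefully is that the factor of $2$ in front of the exponential produced by Theorem~\ref{the5}/\ref{the6} is correctly absorbed into the $\delta/2$ on the right-hand side of the sufficient condition, and that the range of validity $\varepsilon>C_n$ inherited from Example~\ref{ex5} (which, via $\theta=\varepsilon_0/\varepsilon<1$, required $\varepsilon>\varepsilon_0$, with $\varepsilon_0$ bounded by $C_n$ as shown in the proof of Theorem~\ref{the7}) is carried over explicitly into the statement of the corollary.
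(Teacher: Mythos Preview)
Your proposal is correct and mirrors the paper's own (implicit) argument: the paper states Corollary~\ref{cor3} without proof, relying on exactly the route you describe---specializing the corollary to Theorem~\ref{the7} (via Example~\ref{ex5} with $\varphi^*(x)=|x|^\gamma/\gamma$ and the choice $\theta=C_n/\varepsilon$) and then applying Definition~\ref{def9}. Your careful accounting of the factor $2$ and of the validity range $\varepsilon>C_n$ is appropriate and matches the paper.
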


Notice that  for Gaussian processes $\mathbf X(t)$ all results of this section hold true when $\alpha=\gamma=2$ and $C_{\mathbf X}=1.$ 

\section{Conclusions}\label{sec6}
These results may have various applications for the approximation  of stochastic processes. The obtained rate of convergence provides a constructive algorithm for determining the number of terms in the WKS expansions to ensure the approximation of $\varphi$-sub-Gaussian processes with given accuracy  and reliability. 
The developed methodology and new estimates are important extensions of the known results in the  stochastic sampling  theory to the space $L_p([0,T])$ and the class of $\varphi$-sub-Gaussian random processes. 
In addition to classical applications of $\varphi$-sub-Gaussian random processes in signal processing, the results can also be used in new areas, for example, compressed sensing and actuarial modelling, see, e.g., \cite{hog, xue, yam}. 

It would be of interest 
\begin{itemize}
\item to apply this methodology to other WKS sampling problems, for example, shifted sampling, irregular  sampling, aliasing errors, see \cite{ole1,ole0,ole2} and references therein; 
\item to derive analogous results for the multidimensional case and  random fields;
\item to derive similar results for the sub-Gaussian case by the generic chaining method and to compare them with the obtained bounds.
\end{itemize}

\section*{Acknowledgements }
This research was partially supported under Australian Research Council's Discovery Projects funding scheme (project number DP160101366) and  La Trobe University DRP Grant in Mathematical and Computing Sciences.  The authors are also grateful for the referee's careful reading of the paper and suggestions, which helped to improve the paper.

\end{document}